\documentclass[12pt,a4paper]{article}
\RequirePackage{amsthm,amsmath,amssymb,amsfonts}

\theoremstyle{definition}

\newtheorem{remark}{Remark}
\theoremstyle{plain}
\newtheorem{theorem}{Theorem}
\newtheorem{lemma}{Lemma}

\author{Shoou-Ren Hsiau, Ting-Yi Tsai, and Yi-Ching Yao\\
National Changhua  University of Education and Academia Sinica}

\title{Stochastic dominance of first return times for nearest-neighbor random 
walks on $\mathbb{Z}^d$}

\begin{document}

\newcommand{\ML}{\mathcal{L}}
\newcommand{\MP}{\mathbb{P}}
\newcommand{\MS}{\mathbb{S}}
\newcommand{\MST}{\widetilde{\mathbb{S}}}
\newcommand{\MSTU}{\widetilde{\mathbb{S}^{+}}}
\newcommand{\MSTD}{\widetilde{\mathbb{S}^{-}}}
\newcommand{\MSO}{\widehat{\mathbb{S}}}
\newcommand{\MSOU}{\widehat{\mathbb{S}^+}}
\newcommand{\MSOD}{\widehat{\mathbb{S}^-}}
\newcommand{\HT}{\widehat{T}}
\newcommand{\CS}{\mathcal{S}}
\newcommand{\MM}{\mathcal{M}}
\newcommand{\MG}{\mathcal{G}}
\newcommand{\half}{\frac{1}{2}}
\newcommand{\Sh}{S^{\mathbf{h}}}
\newcommand{\Xh}{X^{\mathbf{h}}}
\newcommand{\Sha}{S^{\mathbf{h}'}}
\newcommand{\Shb}{S^{\mathbf{h}''}}
\newcommand{\Xha}{X^{\mathbf{h}'}}
\newcommand{\Xhb}{X^{\mathbf{h}''}}
\newcommand{\Sr}{S^{\mathbf{r}}}
\newcommand{\Sra}{S^{\mathbf{r}'}}
\newcommand{\Srb}{S^{\mathbf{r}''}}
\newcommand{\zero}{\mathbf{0}}
\newcommand{\bfe}{\mathbf{e}}
\newcommand{\Th}{T^{\mathbf{h}}}
\newcommand{\Tha}{T^{\mathbf{h}'}}
\newcommand{\Thb}{T^{\mathbf{h}''}}
\newcommand{\ph}{p^{\mathbf{h}}}
\newcommand{\qh}{q^{\mathbf{h}}}
\newcommand{\qha}{q^{\mathbf{h}'}}
\newcommand{\gh}{\theta^{\mathbf{h}}}
\newcommand{\Ph}{P^{\mathbf{h}}}
\newcommand{\Qh}{Q^{\mathbf{h}}}
\newcommand{\fh}{f^{\mathbf{h}}}
\newcommand{\Sa}{S^{\alpha}}
\newcommand{\Xa}{X^{\alpha}}
\newcommand{\Ta}{T^{\alpha}}
\newcommand{\pa}{p^{\alpha}}
\newcommand{\qa}{q^{\alpha}}
\newcommand{\Pa}{P^{\alpha}}
\newcommand{\Qa}{Q^{\alpha}}
\newcommand{\fa}{f^{\alpha}}
\newcommand{\Da}{\text{D}_{\alpha}}
\newcommand{\ga}{\theta^{\alpha}}
\newcommand{\Sax}{S^{\alpha, \xi}}
\newcommand{\Xax}{X^{\alpha, \xi}}
\newcommand{\Tax}{T^{\alpha, \xi}}
\newcommand{\pax}{p^{\alpha, \xi}}
\newcommand{\qax}{q^{\alpha, \xi}}
\newcommand{\Pax}{P^{\alpha, \xi}}
\newcommand{\Qax}{Q^{\alpha, \xi}}
\newcommand{\fax}{f^{\alpha, \xi}}
\newcommand{\gax}{\theta^{\alpha,\xi}}

\maketitle
\begin{abstract}
For a $d$-dimensional probability vector $\mathbf{h}=(h_1,\dots, h_d)$, let 
$(\Sh_n)_{n\geq 0}$ be a  nearest-neighbor random walk on $\mathbb{Z}^d$ such that 
at each step, it moves to one of the two nearest neighbors in the $i$-th dimension
with probability $\half h_i$ ($i=1,\dots, d$). Let $\Th=\inf\{n\geq 1: 
\Sh_n=(0,\dots,0)\}$, the first return time to the origin. For two 
$d$-dimensional probability
vectors $\mathbf{h}'$ and $\mathbf{h}''$ with the former majorizing the latter,
we show that $\Tha$ is stochastically smaller than $\Thb$. In particular,
the first return time for the $d$-dimensional simple random walk is stochastically
larger than $\Th$ for all $d$-dimensional probability vectors $\mathbf{h}$.

Keywords:  Majorization; P\'olya's random walk theorem; stochastic ordering. 

\textbf{AMS MSC 2020}: Primary 60E15, 60G50
\end{abstract}

\section{Introduction and the main result}

For the $d$-dimensional simple random walk, let $\tau_d$ denote the first return time to 
its starting point.  
The classic P\'olya's random walk theorem (\cite{Polya}) states that the $d$-dimensional simple random walk is
recurrent for $d\leq 2$ and transient for $d\geq 3$, i.e. the return probability 
$\MP(\tau_d<\infty)=1$  for $d\leq 2$ and $\MP(\tau_d<\infty)<1$ for $d\geq 3$.  We will show that 
$\tau_d$ is stochastically increasing in $d$, i.e. $\MP(\tau_d\leq n)$ is decreasing 
in $d \geq 1$ for all 
$n$.  In particular, the escape probability $\MP(\tau_d=\infty)=1-\MP(\tau_d<\infty)$ is increasing in $d$. In the literature, some early works 
\cite{Daley, Lehman, Montroll1956, Montroll1964, Montroll1965} considered more general random walks and computed
the return probability for the 3-dimensional case.

For two $d$-dimensional vectors $\mathbf{h}'=(h_1',\dots, h_d')$ and
$\mathbf{h}''=(h_1'',\dots, h_d'')$,  $\mathbf{h}'$ is said (\emph{cf.} \cite{MOA}) to majorize $\mathbf{h}''$
(written  $\mathbf{h}' \succeq \mathbf{h}''$) if 
\begin{align*}
\sum_{i=1}^j h_{(i)}' \geq \sum_{i=1}^j h_{(i)}''\;\;(j=1,\dots, d-1)\;\;\text{and}
\;\;\sum_{i=1}^d h_{(i)}' = \sum_{i=1}^d h_{(i)}'',
\end{align*}
where $(h_{(1)}',\dots, h_{(d)}')$ is a permutation of $(h_1',\dots, h_d')$ in decreasing order
and similarly for $(h_{(1)}'',\dots, h_{(d)}'')$. We say  $\mathbf{h}'$ strictly
majorizes $\mathbf{h}''$ and 
  write $\mathbf{h}' \succ \mathbf{h}''$
if $\mathbf{h}' \succeq \mathbf{h}''$ and $\mathbf{h}'$ is not a permutation of $\mathbf{h}''$.
 A vector 
$\mathbf{h}=(h_1,\dots, h_d)$ is called a probability vector if $h_i\geq 0$ for all $i$
and $h_1+\cdots+h_d=1$.
Let $\zero_d=(0,\dots,0)$ denote the $d$-dimensional vector of zeros, and $\bfe_d^{i}$  
 the standard $i$-th unit vector in $\mathbb{Z}^d$, $i=1,\dots, d$.
For a probability vector $\mathbf{h}=(h_1,\dots,h_d)$, let
$(\Sh_n)_{n\geq 0}$ be a random walk on the $d$-dimensional integer lattice $\mathbb{Z}^d$ such that
$\Sh_0=\zero_d$ and $\Sh_n=\Xh_1+\cdots+\Xh_n \;(n\geq 1)$, where the 
independent and identically distributed (i.i.d.) increments $\Xh_n$ satisfy
$\MP(\Xh_n=\bfe_d^{i})=\MP(\Xh_n=-\bfe_d^{i})=\half h_i$, $i=1,\dots, d$. Let
$\Th=\inf\{n>0: \Sh_n=\zero_d\}$, the first return time to $\zero_d$. Theorem \ref{Thm1} below is
the main result of this paper.

\begin{theorem}\label{Thm1}
For two probability vectors $\mathbf{h}'$ and $\mathbf{h}''$ with $\mathbf{h}' \succ \mathbf{h}''$, 
we have that $\Tha$ is stochastically smaller than $\Thb$, i.e. 
$\MP(\Tha \leq n)\geq \MP(\Thb \leq n)$ for $n\geq 1$.
\end{theorem}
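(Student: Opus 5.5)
By the standard characterization of majorization (Muirhead; Hardy--Littlewood--P\'olya), $\mathbf{h}'\succ\mathbf{h}''$ implies that $\mathbf{h}''$ is obtained from $\mathbf{h}'$ by a finite sequence of permutations and Robin Hood transfers. Each transfer changes only two coordinates, moving mass from the larger toward the smaller without reversing their order. Permuting coordinates does not change the law of $\Th$, and stochastic order is transitive. It therefore suffices to treat the case where $\mathbf{h}'$ and $\mathbf{h}''$ differ only in two coordinates, say the first two, with $h_1'+h_2'=h_1''+h_2''=s$ and $|h_1'-h_2'|>|h_1''-h_2''|$.

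Next we reduce to a two-dimensional problem by conditioning on which coordinates move. Let $\alpha=h_1/s$. Given the sequence of steps that fall in coordinates $\{1,2\}$ versus the others, the walk returns to $\zero_d$ at time $n$ exactly when the $(1,2)$-block has been used $m$ times and sits at $(0,0)$, and the remaining coordinates also sit at the origin. The remaining coordinates and the total count $m$ have the same law under $\mathbf{h}'$ and $\mathbf{h}''$. For a first return time, though, we need the \emph{joint} event that no earlier return occurs, and this couples the blocks. I would handle this by working with generating functions. Let $U^{\mathbf{h}}(z)=\sum_n \MP(\Sh_n=\zero_d)z^n$. The first-return generating function is $F^{\mathbf{h}}=1-1/U^{\mathbf{h}}$, but stochastic ordering is not directly a generating-function statement. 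So instead I would aim for a pathwise or coupling-type argument: condition on the times at which the $(1,2)$-block moves. Under this conditioning, the embedded two-dimensional walk $\Sa$ with weights $(\alpha,1-\alpha)$ is independent of the other coordinates. The event $\{T\le n\}$ is then an increasing function (in a suitable sense) of the set $A_\alpha=\{k: \Sa_k=(0,0)\}$ of embedded return indices. More precisely, $\{T\le n\}$ occurs iff some $k\in A_\alpha$ with $k\ge1$ coincides with a time at which the other coordinates are at the origin, or the $(1,2)$-block is at the origin while the others first return. This is monotone in $A_\alpha$ together with the independent other-block data.

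The key lemma is therefore a two-dimensional statement. For $\alpha'$ farther from $\tfrac12$ than $\alpha''$, the random zero set $A_{\alpha'}$ of the walk $\Sa$ should dominate $A_{\alpha''}$ in the sense that $\MP(A_{\alpha'}\cap B\neq\emptyset)\ge \MP(A_{\alpha''}\cap B\neq\emptyset)$ for every deterministic set $B\subset\{1,2,\dots\}$. By independence, this inequality suffices after averaging over $B$. To prove it, I would decompose the walk $\Sa$ into its rotated coordinates $U=x_1+x_2$ and $V=x_1-x_2$. Alternatively, I would write $\Sa$ as a mixture: each step chooses dimension $1$ with probability $\alpha$. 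Writing $\alpha=\tfrac12+\tfrac12(2\alpha-1)$, the walk with weight $\alpha'$ can be realized as a mixture in which, with some probability, steps are forced into coordinate 1. I would then attempt an induction on $|B|$, or on $n$, using the renewal structure of $A_\alpha$. This requires showing that the return probabilities $u_\alpha(k)=\MP(\Sa_k=0)$ and the renewal sequence are ordered in a way that propagates, for instance that $u_{\alpha'}(k)/u_{\alpha''}(k)$ behaves monotonically. Explicitly, $u_\alpha(2k)=\sum_j \binom{2k}{2j}\alpha^{2j}(1-\alpha)^{2k-2j}\binom{2j}{j}\binom{2k-2j}{k-j}4^{-k}$, which is Schur-convex in $(\alpha,1-\alpha)$.

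The main obstacle is this hitting-a-set domination for renewal zero sets. Pointwise Schur-convexity of $u_\alpha(k)$ is insufficient, because first-hitting probabilities involve ratios of such terms. I would try first to establish a stronger structural property: that $A_{\alpha}$ is a renewal process whose inter-renewal law decreases stochastically as $\alpha$ moves away from $\tfrac12$. This is the $d=2$, $\mathbf{h}=(\alpha,1-\alpha)$ case of the theorem itself, and it could be proved directly via a reflection/coupling of the one-dimensional projections. For renewal processes, stochastically smaller inter-arrival times give, by the standard coupling of i.i.d.\ sequences, a coupling in which the partial sums satisfy $\tau'_j\le\tau''_j$. I would then need to upgrade this to hitting arbitrary sets $B$, which is not automatic. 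I expect to have to restrict to the particular sets $B$ arising as zero sets of the independent remaining block, and to exploit the fact that these are themselves renewal-type sets, perhaps by symmetry between the blocks.
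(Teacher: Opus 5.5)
\textbf{There is a genuine gap: the central step of your proposal is not proved.} Your reduction to a single two-coordinate transfer is the same as the paper's. Conditioning on the times at which the $\{1,2\}$-block moves is also legitimate: it gives $\{T\le n\}=\{A_\alpha\cap B\neq\emptyset\}$ with $B$ independent of the embedded walk. So the hitting-set domination $\mathbb{P}(A_{\alpha'}\cap B\neq\emptyset)\ge\mathbb{P}(A_{\alpha''}\cap B\neq\emptyset)$ for all finite $B$ would indeed suffice. But that domination is the entire difficulty. You give no argument for it, only a list of obstacles, and both steps of your suggested route fail as stated.

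First, you defer the $d=2$ theorem to an unspecified ``reflection/coupling of the one-dimensional projections.'' However, $x_1+x_2$ and $x_1-x_2$ are simple random walks whose increments have correlation $2\alpha-1$, so they are independent only at $\alpha=\tfrac12$. There is no product structure to exploit, and the $d=2$ case is exactly where the paper's real work lies.

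Second, even granting the $d=2$ theorem, stochastic ordering of inter-renewal laws does not imply domination of hitting probabilities for arbitrary sets. Deterministic renewals with gaps $4$ and $6$ are stochastically ordered, yet only the second hits $B=\{6\}$. Already for singletons $B=\{b\}$ you would need $u_{\alpha'}(b)\ge u_{\alpha''}(b)$. That is a separate fact: it is true here, but it does not follow from the first-return ordering.

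\textbf{The missing idea is that stochastic order can be made a coefficient-sign statement after all.} You set generating functions aside for that reason, but the partial sums $\sum_{k\le n}p_{2k}$ have generating function $P(z)/(1-z)$. Using $P(z)=1-1/Q(z)$ and differentiating in $\alpha$ with $\xi$ fixed, one gets
\[
\sum_{n\ge1}\Big(\sum_{k=1}^n\partial_\alpha p_{2k}\Big)z^n=\frac{\sum_{n\ge1}\partial_\alpha q_{2n}\,z^n}{(1-z)Q(z)^2},\qquad (1-z)Q(z)^2=1-\sum_{n\ge1}(\theta_{n-1}-\theta_n)z^n,
\]
where $\theta_n=\sum_{k=0}^n q_{2k}q_{2n-2k}$. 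Hence two facts suffice:
\begin{itemize}
\item $\partial_\alpha q_{2n}<0$ for $\alpha<\tfrac12$. This is your Schur-convexity of $u_\alpha$. For $d\ge3$ it follows from exactly your block conditioning, applied to $q$ rather than to first returns.
\item $\theta_n$ is nonincreasing in $n$. This is the genuinely new ingredient.
\end{itemize}
The paper proves the second fact from the recursion $\frac{2n+2}{2n+1}q_{2n+2}=2q_2q_{2n}+f_{2n}$ together with the monotonicity of $f_{2n}/q_{2n}$ in $n$. These give lower bounds on $q_{2k}q_{2n-2k}$ relative to $q_{2m}^2$ (or $q_{2m}q_{2m+2}$) that are sharp at $\alpha\in\{0,1\}$. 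At those endpoints $\theta_n\equiv1$, which supplies the binomial identities that close the argument. The renewal identity absorbs the ``no earlier return'' constraint, so no coupling across blocks is ever needed.
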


The next section  introduces further notation and presents preliminary results (including Lemma 1 which
plays a crucial role in the proof of Theorem \ref{Thm1}). Sections 3 and 4 prove Theorem \ref{Thm1}
for the cases $d=2$ and  $d\geq 3$, respectively.

\begin{remark}
The class of random walks $(\Sh_n)_{n\geq 0}$ (indexed by a probability vector
$\mathbf{h}$)  consists of all nearest-neighbor random walks with the
additional symmetry property that the increments $\Xh_n$ have the same distribution as $-\Xh_n$.
In \cite{Montroll1956}, this class of random walks was introduced and studied for some
special cases of $\mathbf{h}$ with an emphasis on the return probability.
Note that the distribution of $\Th$ is invariant with
respect to permutations of $\mathbf{h}=(h_1,\dots, h_d)$. To the best of our knowledge,
there has been little study of first return time. Neither of the classic books 
\cite{LL, Spitzer}  discusses first return time.
\end{remark}

\begin{remark}
The $d$-dimensional probability vectors $(1,0,\dots, 0), (\half, \half, 0,\dots, 0),$ $\dots$, and 
$(\frac{1}{d}, \frac{1}{d},\dots, \frac{1}{d})$ 
are decreasing in the sense of majorization.  
 Setting $\mathbf{h}$ equal to each of the above probability vectors,
the random walks 
$(\Sh_n)_{n\geq 0}$ 
correspond, respectively,  to
the $1$-, $2$-, $\dots$, $d$-dimensional simple random walks. Consequently, $\Th=\tau_d$  for
$\mathbf{h}= (\frac{1}{d}, \frac{1}{d},\dots, \frac{1}{d})$. It follows that
$\tau_d$ is stochastically increasing in $d$. Also $\tau_d$ is stochastically larger than
$\Th$ for all $d$-dimensional probability vectors $\mathbf{h} \neq
(\frac{1}{d}, \frac{1}{d},\dots, \frac{1}{d})$  since all such $\mathbf{h} \succ
(\frac{1}{d}, \frac{1}{d},\dots, \frac{1}{d})$. 
\end{remark}

\begin{remark}
By \cite[Theorem 2, Section 3, Chapter III]{Feller}, we have
$\MP(\Th<\infty)=1-1/(1+R^{\mathbf{h}})$, where 
$R^{\mathbf{h}}=\sum_{n=1}^\infty \MP(\Sh_{2n}=\zero_d)$ 
(the expected total number of returns to $\zero_d$). So $\MP(\Th<\infty)=1$ if and only if
$R^{\mathbf{h}}=\infty$.
 By Theorem \ref{Thm1}, 
$\MP(\Tha<\infty)\geq \MP(\Thb<\infty)$ if $\mathbf{h}' \succ \mathbf{h}''$.
If $\mathbf{h}' \succ \mathbf{h}''$ and $\mathbf{h}''$
 has fewer than $3$ non-zero entries (i.e. $(\Shb_n)_{n\geq 0}$ is genuinely
a $1$- or $2$-dimensional random walk), then $\MP(\Tha<\infty)=\MP(\Thb<\infty)=1$.
 By Remark \ref{section4} in Section 4, if
$\mathbf{h}' \succ \mathbf{h}''$, 
$\MP(\Sha_{2n}=\zero_d)>\MP(\Shb_{2n}=\zero_d)$ for $n\geq 1$. Thus 
 if $\mathbf{h}' \succ \mathbf{h}''$
and  $\mathbf{h}''$ has three or  more non-zero entries, 
we have $R^{\mathbf{h}''}<\infty$ and  $R^{\mathbf{h}'}>R^{\mathbf{h}''}$,
implying that $\MP(\Tha<\infty)>\MP(\Thb<\infty)$. In particular,
$\MP(\tau_d <\infty)$ is strictly decreasing in $d \geq 3$.
Furthermore, since
$\MP(\Sh_2=\zero_d)=\MP(\Th\leq 2)$,
we have $\MP(\Tha\leq 2)>\MP(\Thb \leq 2)$ for $\mathbf{h}'\succ \mathbf{h}''$, 
implying that $\Tha$ is strictly  stochastically smaller
than $\Thb$. So $\tau_d$ is strictly stochastically increasing in $d \geq 1$.
\end{remark}

\begin{remark}
The return probability of the $d$-dimensional simple walk is given (\emph{cf.} \cite[(5.1), p. 246]{Montroll1956}) by 
\[
\MP(\tau_d<\infty)=1-\Big[\int_0^\infty e^{-x} \{I_0(x/d)\}^d \text{d}x\Big]^{-1},
\]
where $I_0(x)=\sum_{m=0}^\infty \frac{1}{(m!)^2} (\frac{x}{2})^{2m}$ is the modified Bessel function of order 0. For $d=3,4,5,6$, $\MP(\tau_d< \infty)\approx 0.34, 0.20, 0.13$ and $0.10$ 
(\emph{cf.} \cite[lines 5-6, p. 247]{Montroll1956}). Furthermore, by \cite[(5.4), p. 247]{Montroll1956},
$\MP(\tau_d<\infty)$ has the asymptotic expansion $\frac{1}{2d}\Big\{1+\frac{2}{2d}+\frac{7}{(2d)^2}+
\cdots\Big\}$ for large $d$. 
\end{remark}

\section{Preliminaries}

For a  random walk $(\Sh_n)_{n\geq 0}$, let $\ph_n=\MP(\Th=n)$, the probability that the random walk returns to $\zero_d$ for the first time after $n$ steps. Let 
$\qh_n=\MP(\Sh_n=\zero_d)$. Note that $\ph_0=0, \qh_0=1$, and
$\ph_n=\qh_n=0$ for odd $n$. Also $\ph_n>0$ for all even $n\geq 2$ and
$\qh_n>0$ for all even $n\geq 0$. It is readily seen (\emph{cf.} \cite[p. 712]{Novak}) that
\begin{align}\label{eq1}
\qh_{2n}=\sum_{k=1}^n \ph_{2k}\; \qh_{2n-2k}\;\; \text{for}\;\; n\geq 1, 
\end{align}
which is equivalent to
\begin{align}\label{eq2}
\Ph(z) \Qh(z)=\Qh(z)-1,
\end{align}
where 
\begin{align*}
\Ph(z)=\sum_{n=0}^\infty \ph_{2n}\; z^{n}\;\;\text{and}\;\; 
\Qh(z)=\sum_{n=0}^\infty \qh_{2n}\; z^{n},
\end{align*}
 the generating functions of the sequences
$(\ph_{2n}, n=0,1,\dots)$ and $(\qh_{2n}, n=0,1,\dots)$.
Furthermore, 
\begin{align}\label{eq2.99}
(\Qh(z))^2=\sum_{n=0}^\infty \gh_n\; z^n\;\;\text{with}\;\; \gh_n=\sum_{k=0}^n \qh_{2k}\; \qh_{2n-2k},
\;n\geq 0.
\end{align}
(Note that since $\ph_n=\qh_n=0$ for odd $n$,  the generating functions
of the sequences $(\ph_n, n=0,1,\dots)$ and $(\qh_n, n=0,1,\dots)$ are 
$\Ph(z^2)$ and $\Qh(z^2)$.)

For $n\geq 1$ and $1\leq j\leq d$, let
\begin{align}
\Gamma_n&=\{(\ell_1,\dots,\ell_d): \ell_i \geq 0 \;\text{for}\;i\geq 1 , \ell_1+\cdots+\ell_d=n\},
\label{eq2.02}\\
\Gamma_{n, j}'
 &=\{(\ell_1,\dots,\ell_d)\in \Gamma_n: \ell_j\geq 1\}.\label{eq2.03}
\end{align}
For either of the events $\{\Sh_{2n}=\zero_d\}$ and $\{\Sh_{2n}=2 \bfe_d^{j}\}$ to occur,  in the first $2n$ steps, 
 the random walk must  have moved in each dimension for an even number of times.  We have
\begin{align}
\qh_{2n}=\MP(\Sh_{2n}=\zero_d)
&=\sum_{(\ell_1,\dots,\ell_d) \in \Gamma_n} \frac{(2n)!}{\prod_{i=1}^d (2\ell_i)!} \prod_{i=1}^d
\binom{2\ell_i}{\ell_i} \prod_{i=1}^d (\half h_i)^{2\ell_i}\notag\\
&=\sum_{(\ell_1,\dots,\ell_d) \in \Gamma_n} \frac{(2n)!}{\prod_{i=1}^d (\ell_i!)^2} 
 \prod_{i=1}^d (\half h_i)^{2\ell_i}\;,\label{eq2.00}\\
\MP(\Sh_{2n}=2\bfe_d^{j})&=\sum_{(\ell_1,\dots,\ell_d) \in \Gamma_{n,j}'} 
\frac{(2n)!}{\prod_{i=1}^d (2\ell_i)!} \Big(\frac{(2\ell_j)!}{(\ell_j-1)!(\ell_j+1)!}\Big)
\prod_{1\leq i \leq d, i\neq j} 
\binom{2\ell_i}{\ell_i} \prod_{i=1}^d (\half h_i)^{2\ell_i}\notag\\
&=\sum_{(\ell_1,\dots,\ell_d) \in \Gamma_{n,j}'} 
\frac{(2n)!}{(\ell_j-1)!(\ell_j+1)!
\prod_{1\leq i \leq d, i\neq j} 
(\ell_i!)^2} \prod_{i=1}^d (\half h_i)^{2\ell_i}.\label{eq2.01}
\end{align}
Let 
\begin{align}\label{eq3}
\fh_{n}=2\sum_{1\leq i<j\leq d} h_i h_j \;\MP(\Sh_{n}=\bfe_d^{i}+\bfe_d^j)\;\;\text{for}\;\;n\geq 0.
\end{align}
Note that $\fh_0=0$ and $\fh_n=0$ for odd $n$. 
The next lemma is a key result in the proof of Theorem \ref{Thm1}.
\begin{lemma}\label{Lem1}
For $n\geq 0$, we have
\begin{align}
\frac{2n+2}{2n+1}\; \qh_{2n+2}=2\;\qh_2\;\qh_{2n}+ \fh_{2n}.\notag
\end{align}
\end{lemma}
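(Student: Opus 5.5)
The plan is a direct computation from the explicit formula \eqref{eq2.00}: expand the left-hand side term by term and sort the terms into those giving $2\qh_2\qh_{2n}$ and those giving $\fh_{2n}$. For $n=0$ the identity reads $2\qh_2=2\qh_2+\fh_0$, which holds since $\fh_0=0$, so take $n\geq 1$. Applying \eqref{eq2.00} with $n+1$ in place of $n$ gives
\[
\frac{2n+2}{2n+1}\,\qh_{2n+2}=\sum_{(\ell_1,\dots,\ell_d)\in\Gamma_{n+1}} \frac{(2n)!\,(2n+2)^2}{\prod_{i}(\ell_i!)^2}\prod_i(\tfrac12 h_i)^{2\ell_i},
\]
using $(2n+2)!/(2n+1)=(2n)!\,(2n+2)$. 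Since $\ell_1+\cdots+\ell_d=n+1$ on $\Gamma_{n+1}$, write
\[
(2n+2)^2=\Big(2\sum_i \ell_i\Big)^2=4\sum_i \ell_i^2+8\sum_{i<j}\ell_i\ell_j .
\]
This splits the left-hand side into diagonal terms and cross terms.

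For the diagonal terms, use $\ell_i^2/(\ell_i!)^2=1/((\ell_i-1)!)^2$. Only $\ell_i\geq1$ contributes, so substitute $\ell_i\mapsto\ell_i+1$; this maps $\Gamma_{n+1}\cap\{\ell_i\ge1\}$ bijectively onto $\Gamma_n$ and pulls out a factor $(\tfrac12 h_i)^2$. By \eqref{eq2.00}, the $i$-th diagonal term is therefore $4(\tfrac12 h_i)^2\qh_{2n}=h_i^2\qh_{2n}$. Summing over $i$ gives $\big(\sum_i h_i^2\big)\qh_{2n}$. Since $\qh_2=\sum_i 2(\tfrac12h_i)^2=\tfrac12\sum_i h_i^2$ (the $n=1$ case of \eqref{eq2.00}), this equals $2\qh_2\qh_{2n}$.

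For the cross terms, first derive a formula for $\MP(\Sh_{2n}=\bfe_d^i+\bfe_d^j)$, analogous to \eqref{eq2.01}. The walk must make $2a+1$ moves in dimension $i$, with $a+1$ of them positive; $2b+1$ moves in dimension $j$, with $b+1$ positive; and $2\ell_k$ moves in each other dimension $k$, split evenly. Here $a+b+1+\sum_{k\ne i,j}\ell_k=n$. Multiplying the multinomial coefficient by the binomial choices gives
\[
\MP(\Sh_{2n}=\bfe_d^i+\bfe_d^j)=\sum \frac{(2n)!}{a!(a+1)!\,b!(b+1)!\prod_{k\neq i,j}(\ell_k!)^2}\,(\tfrac12h_i)^{2a+1}(\tfrac12h_j)^{2b+1}\prod_{k\ne i,j}(\tfrac12h_k)^{2\ell_k}.
\]
Multiplying by $2h_ih_j=8(\tfrac12h_i)(\tfrac12h_j)$ raises the exponents to $2(a+1)$ and $2(b+1)$. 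Set $\ell_i=a+1$ and $\ell_j=b+1$; this is a bijection onto $\Gamma_{n+1}\cap\{\ell_i,\ell_j\ge1\}$. The coefficient becomes $8(2n)!/\big(\ell_i!(\ell_i-1)!\,\ell_j!(\ell_j-1)!\prod_{k\ne i,j}(\ell_k!)^2\big)$, which is exactly the $(i,j)$ cross term $8\ell_i\ell_j(2n)!/\prod_k(\ell_k!)^2$. Terms with $\ell_i=0$ or $\ell_j=0$ vanish anyway. Summing over $i<j$ and comparing with \eqref{eq3} shows the cross terms total $\fh_{2n}$, which completes the identity.

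The main obstacle is the combinatorial derivation of $\MP(\Sh_{2n}=\bfe_d^i+\bfe_d^j)$. The parity constraints and the index bookkeeping under the shift to $\Gamma_{n+1}$ must be handled so that the factor $2h_ih_j$ in $\fh_{2n}$ supplies exactly the missing powers of $\tfrac12h_i$ and $\tfrac12h_j$, together with the constant $8$.
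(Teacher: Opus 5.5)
Your proof is correct, but it is organized differently from the paper's. The paper conditions on $\Sh_2$. It splits according to whether the first two steps cancel, move in two different dimensions, or move twice the same way in one dimension, which gives $\qh_{2n+2}=A_n+B_n+\sum_i C_{n,i}$. Here $A_n=\qh_2\qh_{2n}$ and $B_n=\fh_{2n}$ follow from symmetry and the Markov property with no computation. The algebra is confined to $C_{n,i}=\half h_i^2\,\MP(\Sh_{2n}=2\bfe_d^i)$. Using (\ref{eq2.01}), this is rewritten as $D_{n,i}-E_{n,i}$, with $\sum_i D_{n,i}=\qh_2\qh_{2n}$ and $\sum_i E_{n,i}=\qh_{2n+2}/(2n+1)$; moving the $E$-term to the left produces the factor $\frac{2n+2}{2n+1}$.

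You instead absorb that factor into the multinomial weight from the start and split $(2n+2)^2=(2\sum_i\ell_i)^2$ into diagonal and off-diagonal parts. This removes the subtracted term and the need for $\MP(\Sh_{2n}=2\bfe_d^i)$. The price is that you must derive the explicit formula for $\MP(\Sh_{2n}=\bfe_d^i+\bfe_d^j)$, which the paper writes down only for $d=2$ in (\ref{eq16.11}) and otherwise never needs.

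Rewritten as sums over $\Gamma_{n+1}$ with weight $(2n)!/\prod_k(\ell_k!)^2$, both arguments check the same summand identity. In the paper it reads $2\sum_i\ell_i^2+2\sum_i\ell_i(\ell_i-1)+8\sum_{i<j}\ell_i\ell_j=(2n+2)(2n+1)$; in yours, $4\sum_i\ell_i^2+8\sum_{i<j}\ell_i\ell_j=(2n+2)^2$. Your version is shorter and purely combinatorial. The paper's version shows why $\fh_{2n}$ appears: it comes from paths whose first two steps lie in different dimensions.
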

\begin{proof}
The lemma holds trivially for $n=0$. 
For $n \geq 1$, we have
\begin{align}\label{eq4}
\qh_{2n+2}=\MP(\Sh_{2n+2}=\zero_d)=\MP(\Sh_2 \in \Lambda, \Sh_{2n+2}=\zero_d)
=A_n+B_n+\sum_{i=1}^d C_{n,i},
\end{align}
where $\Lambda=\{\zero_d\}\cup 
\{\pm \bfe_d^{i} \pm \bfe_d^j:  1\leq i<j\leq d\} \cup \{\pm 2 \bfe_d^{i}: i=1,\dots, d\}$,
and 
\begin{align*}
A_n&=\MP(\Sh_2=\zero_d, \Sh_{2n+2}=\zero_d),\\
B_n&= \sum_{1\leq i< j \leq d}\; \sum_{u, v \in \{0,1\}}
\MP(\Sh_2=(-1)^u\bfe_d^{i}+(-1)^v \bfe_d^j, \Sh_{2n+2}=\zero_d),\\
C_{n,i}&= \sum_{u \in \{0,1\}} \MP(\Sh_2= (-1)^u 2 \bfe_d^{i}, \Sh_{2n+2}=\zero_d),\; 
i=1,\dots, d.
\end{align*}
Then
\begin{align}\label{eq5}
A_n=\MP(\Sh_2=\zero_d)\; \MP(\Sh_{2n+2}=\zero_d \; \big| \; \Sh_2=\zero_d)=\qh_2\;\qh_{2n}.
\end{align}
By symmetry, for $1\leq i<j\leq d$ and $u, v \in \{0,1\}$,
\begin{align*}
\MP(\Sh_2=(-1)^u \bfe_d^{i}&+(-1)^v \bfe_d^j, \Sh_{2n+2}=\zero_d)\\
&=\MP(\Sh_2=-\bfe_d^{i}-\bfe_d^j, \Sh_{2n+2}=\zero_d)\\
&=\MP(\Sh_2=-\bfe_d^{i}-\bfe_d^j)\;\MP(\Sh_{2n+2}=\zero_d\;\big|\;\Sh_2=-\bfe_d^{i}-\bfe_d^j)\\
&=2 (\half h_i) (\half h_j)\;\MP(\Sh_{2n+2}=\bfe_d^{i}+\bfe_d^j\;\big|\;\Sh_2=\zero_d)\\
&=\half h_i h_j\;\MP(\Sh_{2n}=\bfe_d^{i}+\bfe_d^j),
\end{align*}
so that
\begin{align}
B_n&= \sum_{1\leq i< j \leq d}\; \sum_{u,v \in\{0,1\}}
\MP(\Sh_2=(-1)^u \bfe_d^{i}+(-1)^v \bfe_d^j, \Sh_{2n+2}=\zero_d)\notag\\
&=2\; \sum_{1\leq i<j\leq d} h_i h_j \MP(\Sh_{2n}=\bfe_d^{i}+\bfe_d^j)=\fh_{2n}.\label{eq6}
\end{align}
By symmetry again,
\begin{align*}
\MP(\Sh_2=2 \bfe_d^{i}, \Sh_{2n+2}=\zero_d)&=\MP(\Sh_2=-2 \bfe_d^{i}, \Sh_{2n+2}=\zero_d)\\
&=\MP(\Sh_2=-2 \bfe_d^{i}) \;\MP(\Sh_{2n+2}=\zero_d \;\big|\; \Sh_2=-2 \bfe_d^{i})\\
&=\MP(\Sh_2=-2 \bfe_d^{i}) \;\MP(\Sh_{2n+2}=2 \bfe_d^{i} \;\big|\; \Sh_2= \zero_d)\\
&=(\half h_i)^2 \;\MP(\Sh_{2n}=2 \bfe_d^{i}),
\end{align*}
so that
\begin{align}\label{eq7}
C_{n,i}= \sum_{u \in \{0,1\}} \MP(\Sh_2=(-1)^u 2 \bfe_d^{i}, \Sh_{2n+2}=\zero_d)=\half  h_i^2\; \MP(\Sh_{2n}=2 \bfe_d^{i}).
\end{align}
Recalling the definition of $\Gamma_n$ in (\ref{eq2.02}),
we claim that
\begin{align}\label{eq8}
C_{n,i}=D_{n,i}-E_{n,i},\; i=1,\dots, d,
\end{align}
where 
\begin{align}
D_{n,i}&=2 \sum_{(\ell_1,\ell_2,\dots,\ell_d) \in \Gamma_{n+1}} \frac{(2n)! \;(\ell_i)^2}
{\prod_{j=1}^d (\ell_j!)^2}  \prod_{j=1}^d (\half h_j)^{2 \ell_j}\label{eq8.1}\\
&=\half h_i^2\; \qh_{2n},\label{eq9}\\
E_{n,i}&=2 \sum_{(\ell_1,\ell_2,\dots,\ell_d) \in \Gamma_{n+1}} \frac{(2n)!\;\ell_i}
{\prod_{j=1}^d (\ell_j!)^2}  \prod_{j=1}^d (\half h_j)^{2 \ell_j}\;.\label{eq10}
\end{align}
To show (\ref{eq8}), for notational simplicity,  we treat only the case $i=1$.
Recall the definition of $\Gamma_{n,j}'$ in (\ref{eq2.03}) and  let  
$\Gamma_{n, 1}''=\{(\ell_1,\dots,\ell_d)\in \Gamma_n: \ell_1\geq 2\}$,
so that $\Gamma_{n,1}'' \subset \Gamma_{n,1}' \subset \Gamma_{n}$. We have by (\ref{eq2.01}) 
and (\ref{eq7})
\begin{align*}
C_{n,1}&=\half h_1^2 \;\MP(\Sh_{2n}=2 \bfe_d^{1})\\
&=\half h_1^2 \sum_{(\ell_1,\dots, \ell_d)\in \Gamma_{n, 1}'} \frac{(2n)!}{(\ell_1-1)! (\ell_1+1)!
\prod_{j=2}^d (\ell_j!)^2}
\prod_{j=1}^d (\half h_j)^{2\ell_j}\\
&=\half h_1^2 \sum_{(\ell_1,\dots, \ell_d)\in \Gamma_{n, 1}'} \frac{(2n)! \;\ell_1 (\ell_1+1)}{((\ell_1+1)!)^2
\prod_{j=2}^d (\ell_j!)^2}
\prod_{j=1}^d (\half h_j)^{2\ell_j}\\
&=2 \sum_{(\ell_1'-1,\ell_2,\dots,\ell_d) \in \Gamma_{n, 1}'} \frac{(2n)! (\ell_1'-1) \ell_1'}
{(\ell_1'!)^2 \prod_{j=2}^d (\ell_j!)^2} (\half h_1)^{2\ell_1'} \prod_{j=2}^d (\half h_j)^{2 \ell_j}
\;\;(\text{setting}\;\; \ell_1'=\ell_1+1)\\
&=2 \sum_{(\ell_1',\ell_2,\dots,\ell_d) \in \Gamma_{n+1, 1}''} \frac{(2n)! (\ell_1'-1) \ell_1'}
{(\ell_1'!)^2 \prod_{j=2}^d (\ell_j!)^2} (\half h_1)^{2 \ell_1'} \prod_{j=2}^d (\half h_j)^{2 \ell_j}\\
&=2 \sum_{(\ell_1,\ell_2,\dots,\ell_d) \in \Gamma_{n+1}} \frac{(2n)! (\ell_1-1) \ell_1}
{\prod_{j=1}^d (\ell_j!)^2}  \prod_{j=1}^d (\half h_j)^{2 \ell_j}\;\;
(\text{terms with}\;\; \ell_1=0,1\;\;\text{vanishing})\\
&=D_{n,1}-E_{n,1},
\end{align*}
establishing (\ref{eq8}) for $i=1$ (by (\ref{eq8.1}) and (\ref{eq10})). It remains to show
(\ref{eq9}).
Again for $i=1$, we have 
\begin{align*}
D_{n,1}&=2 \sum_{(\ell_1,\ell_2,\dots,\ell_d) \in \Gamma_{n+1}} \frac{(2n)! \;(\ell_1)^2}
{\prod_{j=1}^d (\ell_j!)^2}  \prod_{j=1}^d (\half h_j)^{2 \ell_j}\\
&=2 \sum_{(\ell_1,\ell_2,\dots,\ell_d) \in \Gamma_{n+1,1}'} \frac{(2n)! \;(\ell_1)^2}
{\prod_{j=1}^d (\ell_j!)^2}  \prod_{j=1}^d (\half h_j)^{2 \ell_j}\\
&=2 \sum_{(\ell_1-1,\ell_2,\dots,\ell_d) \in \Gamma_{n}} \frac{(2n)!}
{[(\ell_1-1)!]^2 \prod_{j=2}^d (\ell_j!)^2}  \prod_{j=1}^d (\half h_j)^{2 \ell_j}\\
&=2 (\half h_1)^2 \sum_{(\ell_1-1,\ell_2,\dots,\ell_d) \in \Gamma_{n}} \frac{(2n)!}
{[(\ell_1-1)!]^2 \prod_{j=2}^d (\ell_j!)^2} (\half h_1)^{2(\ell_1-1)}
 \prod_{j=2}^d (\half h_j)^{2 \ell_j}\\
&=2 (\half h_1)^2 \sum_{(\ell_1,\ell_2,\dots,\ell_d) \in \Gamma_{n}} \frac{(2n)!}
{\prod_{j=1}^d (\ell_j!)^2}  \prod_{j=1}^d (\half h_j)^{2 \ell_j}\;\;(\text{replacing}\;\;\ell_1-1\;\;
\text{by}\;\; \ell_1)\\
&=\half h_1^2 \;\MP(\Sh_{2n}=\zero_d)=\half h_1^2\;\qh_{2n}\;\;
(\text{by} \;\;(\ref{eq2.00})),
\end{align*}
proving (\ref{eq9}).
Now by (\ref{eq9}) and (\ref{eq10}),
\begin{align}
\sum_{i=1}^d D_{n,i}&=\sum_{i=1}^d \half h_i^2 \; \qh_{2n}
= \MP(\Sh_2=\zero_d)\; \qh_{2n}
=\qh_2\;\qh_{2n},\label{eq11}\\
\sum_{i=1}^d E_{n,i}&=\sum_{i=1}^d \;2
\sum_{(\ell_1,\ell_2,\dots,\ell_d) \in \Gamma_{n+1}}  \frac{(2n)!\;\ell_i}
{\prod_{j=1}^d (\ell_j)^2}  \prod_{j=1}^d (\half h_j)^{2 \ell_j}\notag\\
&=2(n+1) \sum_{(\ell_1,\ell_2,\dots,\ell_d) \in \Gamma_{n+1}}  \frac{(2n)!}
{\prod_{j=1}^d (\ell_j)^2}  \prod_{j=1}^d (\half h_j)^{2 \ell_j}\notag\\
&=\frac{2(n+1)}{(2n+1)(2n+2)} \sum_{(\ell_1,\ell_2,\dots,\ell_d) \in \Gamma_{n+1}}  \frac{(2n+2)!}
{\prod_{j=1}^d (\ell_j)^2}  \prod_{j=1}^d (\half h_j)^{2 \ell_j}\notag\\
&=\frac{1}{2n+1} \MP(\Sh_{2n+2}=\zero_d)=\frac{1}{2n+1}\; \qh_{2n+2},\label{eq12}
\end{align}
where the second-to-last equality is by (\ref{eq2.00}).
By (\ref{eq4})--(\ref{eq8}) and  (\ref{eq11})--(\ref{eq12}), we have
\begin{align*}
\qh_{2n+2}=2 \qh_2\;\qh_{2n}-\frac{1}{2n+1}\; \qh_{2n+2}+\fh_{2n},
\end{align*}
from which the lemma follows.
\end{proof}

\section{The case $d=2$}

In this section, we prove Theorem \ref{Thm1} for the case $d=2$. 
 Let $\mathbf{h}=(\alpha, 1-\alpha)$
where $\alpha \in [0,1]$. We write $\Sh_n=S_n^{\alpha}, \ph_{n}=\pa_{n}, \qh_n=\qa_n,
\gh_n=\ga_n,  \fh_n=\fa_n$, and
\begin{align*}
\Ph(z)&=\Pa(z)=\sum_{n=1}^\infty \pa_{2n}\; z^{n},\;\;\; \Qh(z)=\Qa(z)=\sum_{n=0}^\infty \qa_{2n}\; z^{n}\;.
\end{align*}
 We also write $\Da=\frac{\text{d}}{\text{d} \alpha}$. For 
$\alpha', \alpha'' \in [0,\half]$, $(\alpha', 1-\alpha') \succ (\alpha'',1-\alpha'')$
if and only if $\alpha' < \alpha''$. Thus for $d=2$, Theorem \ref{Thm1} is equivalent to
the statement that
\begin{align}\label{eqd=2}
 \sum_{k=1}^n \pa_{2k}\;\;\text{is decreasing in}\;\;\alpha
\in [0,\half]\;\;\text{for}\;\; n\geq 1.
\end{align} 
 Throughout, we take the convention that $\sum_{k=\ell}^m :=0$ for $m<\ell$ and $z$ is arbitrary
with $|z|<1$. Since $(1-\Pa(z)) \Qa(z)=1$  by (\ref{eq2}),
we have
\begin{align}\label{eq13.01}
\Da (1-\Pa(z))=\Da(1/\Qa(z))=-\Da \Qa(z)/(\Qa(z))^2,
\end{align}
where the operator $\Da$ treats $z$ as fixed.
By (\ref{eq2.99}) and (\ref{eq13.01}),
\begin{align}\label{eq14}
\sum_{n=1}^\infty \Da \pa_{2n}\; z^{n}&=\frac{\sum_{n=1}^\infty \Da \qa_{2n}\; z^{n}}{(\Qa(z))^2}
=\frac{\sum_{n=1}^\infty \Da \qa_{2n}\; z^{n}}{\sum_{n=0}^\infty \ga_{n}\;z^{n}}\;.
\end{align}

\begin{remark}
Note that $\pa_{2n}=K_n \;\alpha^n (1-\alpha)^n$ and $\qa_{2n}=K_n' \;\alpha^n (1-\alpha)^n$ for
some positive integers $K_n$ and $K_n'$, and that $\ga_{n}$ is a polynomial in $\alpha$ of order $2n$.
We have that for $|z|<1$, the three power  series in $z$, $\Pa(z), \Qa(z)$ and $\sum_{n=0}^\infty \ga_{n} \;z^{n}$ converge absolutely. 
Furthermore, since
\begin{align*}
\Da \pa_{2n}&=K_n \Da (\alpha^n (1-\alpha)^n)=\pa_{2n} (\frac{n}{\alpha}-\frac{n}{1-\alpha}),\\
 \Da \qa_{2n}&=K_n' \Da (\alpha^n (1-\alpha)^n)=\qa_{2n} (\frac{n}{\alpha}-\frac{n}{1-\alpha}),
\end{align*}
$\sum_{n=1}^\infty \Da \pa_{2n}\;z^{n}$ and $\sum_{n=0}^\infty \Da \qa_{2n}\;z^{n}$ converge absolutely
for $|z|<1$. 
It is readily shown that for $|z|<1$, 
\begin{align*}
\Da \Pa(z)=\sum_{n=1}^\infty \Da \pa_{2n}\;z^{n}\;\;\text{and}\;\;
\Da \Qa(z)=\sum_{n=0}^\infty \Da \qa_{2n}\;z^{n}.
\end{align*}
\end{remark}

To prove (\ref{eqd=2}), it suffices to show
\begin{align}\label{eqd=2.1}
\sum_{k=1}^n \Da \pa_{2k}\leq 0\;\;\text{for}\;\; \alpha \in [0,\half]\;\;\text{and}\;\; 
n\geq 1,
\end{align}
for which we need the following lemmas (whose proofs are given
at the end of this section).
\begin{lemma}\label{Lem2}
For $n\geq 1$ and $\alpha<\half$, we have  $\Da \qa_{2n}< 0$.
\end{lemma}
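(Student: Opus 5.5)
The plan is to make $\qa_{2n}$ explicit as a mixture over how many of the $2n$ steps are horizontal, and then to show that raising $\alpha$ toward $\half$ moves mass toward the balanced splits, where the return probability is smallest. Note that the Remark's formula $\qa_{2n}=K_n'\alpha^n(1-\alpha)^n$ cannot be used: already $\qa_2=\half(\alpha^2+(1-\alpha)^2)$. I therefore work from (\ref{eq2.00}) with $d=2$. Since $(2n)!/((\ell!)^2((n-\ell)!)^2)=\binom{2n}{n}\binom{n}{\ell}^2$, we get
\begin{align*}
\qa_{2n}=\sum_{\ell=0}^{n} \binom{2n}{2\ell}\alpha^{2\ell}(1-\alpha)^{2n-2\ell}\,\psi_n(\ell),
\qquad \psi_n(\ell)=4^{-n}\binom{2\ell}{\ell}\binom{2n-2\ell}{n-\ell}.
\end{align*}
Probabilistically, $\qa_{2n}=\mathbb{E}[c(N)c(2n-N)]$, where $N\sim\mathrm{Bin}(2n,\alpha)$ counts the horizontal steps and $c(k)$ is the probability that a one-dimensional simple random walk is at $0$ at time $k$ ($c(k)=0$ for odd $k$).

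\textbf{Step 1 (shape of $\psi_n$).} A direct ratio computation gives
\[
\psi_n(\ell+1)/\psi_n(\ell)=(2\ell+1)(n-\ell)/\big((\ell+1)(2n-2\ell-1)\big),
\]
and this is $\le 1$ exactly when $\ell\le (n-1)/2$. Hence $\psi_n(\ell)=\psi_n(n-\ell)$, and $\psi_n$ is decreasing on $0\le \ell\le \lfloor n/2\rfloor$, strictly except possibly at the last step. So $\psi_n$ is symmetric and unimodal with its minimum at the center.

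\textbf{Step 2 (derivative via the binomial shift identity).} For $\phi(k)=c(k)c(2n-k)$ I use $\Da\,\mathbb{E}\phi(N)=2n\,\mathbb{E}[\phi(N'+1)-\phi(N')]$ with $N'\sim \mathrm{Bin}(2n-1,\alpha)$. Because $\phi(j)=\phi(2n-j)$, the increment $\Delta(k)=\phi(k+1)-\phi(k)$ satisfies $\Delta(2n-1-k)=-\Delta(k)$. Pairing $k$ with $2n-1-k$ gives
\[
\Da\qa_{2n}=2n\sum_{k=0}^{n-1}\big[\MP(N'=k)-\MP(N'=2n-1-k)\big]\Delta(k),
\]
where for $\alpha<\half$ every weight $w_k=\MP(N'=k)-\MP(N'=2n-1-k)$ is strictly positive. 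The parity of $c$ makes the $\Delta(k)$ alternate in sign: for even $k=2j$, $\Delta(2j)=-\phi(2j)$, and for odd $k=2j+1$, $\Delta(2j+1)=\phi(2j+2)$. Grouping $k=2j,2j+1$ therefore gives
\[
\Da\qa_{2n}=2n\sum_j\big(w_{2j+1}\phi(2j+2)-w_{2j}\phi(2j)\big),
\]
with appropriate care at the middle index when $n$ is odd.

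\textbf{Step 3 (the main obstacle).} It remains to show that this grouped sum is negative. The pieces are favorable: $\phi(2j)=\psi_n(j)$ is nonincreasing in $j$ on $j\le n/2$ by Step 1, and $w_k$ is positive. The difficulty is that I also need $w_{2j+1}\le w_{2j}$, or a summed (Abel) version of it, and $w_k$ need not be monotone near $k=0$ when $\alpha$ is small. Concretely, $w_k/\MP(N'=k)=1-r^{2n-1-2k}$ with $r=\alpha/(1-\alpha)$, so
\[
\frac{w_{2j+1}}{w_{2j}}=\frac{2n-1-2j}{2j+1}\, r\, \frac{1-r^{2n-3-4j}}{1-r^{2n-1-4j}}.
\]
The first thing I would try is Abel summation in $j$: write $\phi(2j)=\sum_{i\ge j}(\phi(2i)-\phi(2i+2))+\phi(\text{middle})$ and reduce the claim to
\[
\sum_{j\le i}\big(w_{2j+1}-w_{2j}\big)\le 0\quad\text{for each } i,
\]
that is, to a statement purely about the binomial law $\mathrm{Bin}(2n-1,\alpha)$, namely $\MP(N' \text{ odd},\,|N'-n+\half|\ge m)\le \MP(N' \text{ even},\,|N'-n+\half|\ge m)$. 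I expect this to be the crux. To prove it I would pair each odd $k$ with the even $k-1$ and use that in the lower tail ($k<n$) the binomial pmf for $\alpha<\half$ is decreasing in $k$ once $k>2n\alpha$. Where that monotonicity fails, I would go back to the generating function $\sum_k(\pm1)^k\MP(N'=k)=(1-2\alpha)^{2n-1}\ge 0$, which controls the full parity imbalance, and combine it with the symmetric pairing to handle the truncated version. Strictness of the final inequality follows because the $j=0$ step of $\psi_n$ is strict and $w_0>0$.
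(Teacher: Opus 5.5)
You have a genuine gap. Your Steps 1 and 2 are correct, and you are right that the Remark's closed form for $\qa_{2n}$ is wrong. But Step 3, which is the whole difficulty, is left unproved, and the reduction you propose there is misderived and in fact false.

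Put $\delta_i=\psi_n(i)-\psi_n(i+1)\ge 0$ and $m^\ast=\lceil (n-1)/2\rceil$. Summing by parts in $\sum_{k=0}^{n-1}w_k\Delta(k)$, with $\Delta(2j)=-\psi_n(j)$ and $\Delta(2j+1)=\psi_n(j+1)$, actually gives
\[
\sum_{k=0}^{n-1} w_k\Delta(k) = -(1-2\alpha)^{2n-1}\psi_n(m^\ast) - \sum_{i=0}^{m^\ast-1}\delta_i\sum_{k=0}^{2i}(-1)^k w_k .
\]
So the coefficient of $\delta_i$ involves alternating partial sums of the $w_k$ that end at the \emph{even} index $2i$.

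Your condition $\sum_{j\le i}(w_{2j+1}-w_{2j})\le 0$ ends at the odd index $2i+1$. For $i=0$ it reads $w_1\le w_0$, which is exactly the monotonicity you said may fail, and it does fail. For $n=3$ and $\alpha=\frac14$ one gets $w_0=242/1024<w_1=390/1024$. Equivalently, on $\{N'\le 1\}\cup\{N'\ge 4\}$ the odd values carry probability $406/1024$ and the even values only $258/1024$. So the inequality you identify as the crux cannot be proved. Your closing sketch (pmf monotonicity on part of the range plus $\mathbb{E}(-1)^{N'}=(1-2\alpha)^{2n-1}$) is in any case only a plan, not an argument.

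The direct route can be repaired. By the display, it suffices to show $\sum_{k=0}^{K}(-1)^k w_k\ge 0$ for even $K\le n-1$; strict negativity then comes from the first term, since $1-2\alpha>0$. Write $M=2n-1$ and $c=1-2\alpha$, and use the polynomial identity
\[
\sum_{k=0}^K\binom{M}{k}x^ky^{M-k}=(M-K)\binom{M}{K}\int_0^y u^{M-K-1}(x+y-u)^K\,du ,
\]
valid for all real $x,y$. Applying it at $(x,y)=(-\alpha,1-\alpha)$ and at $(x,y)=(\alpha-1,\alpha)$, the claim becomes
\[
\int_0^{1-\alpha}u^{M-K-1}(c-u)^K\,du\ \ge\ \int_0^{\alpha}u^{M-K-1}(c+u)^K\,du .
\]
To see this, drop the nonnegative part over $[0,c]$ and substitute $u=c+v$ on $[c,1-\alpha]$. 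This turns the left side into at least $\int_0^\alpha (c+v)^{M-K-1}v^K\,dv$, which dominates the right side because $M-K-1\ge K$ and $c+v\ge v$.

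The paper avoids this binomial analysis altogether. It inducts on $n$ using Lemma 1 together with the identity $\Da\fa_{2n}=-2\alpha(1-\alpha)\Da\qa_{2n}$, which gives in one line
\[
\Da\bigl(\tfrac{2m+2}{2m+1}\qa_{2m+2}\bigr)=2(\Da\qa_2)\,\qa_{2m}+(2\alpha-1)^2\,\Da\qa_{2m}<0 .
\]
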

\begin{lemma}\label{Lem3}
For $n\geq 0$ and $\alpha \in [0,1]$, we have $\ga_{n}\geq \ga_{n+1}$.
\end{lemma}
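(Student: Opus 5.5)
The plan is to turn $\ga_n\ge\ga_{n+1}$ into a statement about the single sequences $(\qa_{2k})$ and $(\pa_{2k})$. The case $\alpha\in\{0,1\}$ suggests the route. There the walk is the one-dimensional simple random walk, so $\Qa(z)=(1-z)^{-1/2}$, $(\Qa(z))^2=(1-z)^{-1}$ and $\ga_n\equiv 1$: the lemma holds with equality. So we need an argument that is exact in dimension one and only loses (in the right direction) for $0<\alpha<1$.

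\emph{Step 1 (algebraic reduction).} By (\ref{eq2.99}),
\[
\ga_n-\ga_{n+1}=\sum_{k=0}^{n}\bigl(\qa_{2k}-\qa_{2k+2}\bigr)\qa_{2n-2k}-\qa_{2n+2}.
\]
This is just a regrouping: $\ga_{n+1}=\sum_{k=0}^{n}\qa_{2k+2}\qa_{2n-2k}+\qa_{2n+2}$. To write $\qa_{2n+2}$ in the same shape, I will use the renewal identity (\ref{eq1}):
\[
\qa_{2n+2}=\sum_{k=0}^{n}\pa_{2k+2}\,\qa_{2n-2k}.
\]
Hence
\[
\ga_n-\ga_{n+1}=\sum_{k=0}^{n}\bigl(\qa_{2k}-\qa_{2k+2}-\pa_{2k+2}\bigr)\qa_{2n-2k}.
\]
Since all $\qa_{2m}\ge 0$, it suffices to prove the termwise inequality
\begin{equation*}
\qa_{2k}-\qa_{2k+2}\ \ge\ \pa_{2k+2}\qquad (k\ge 0,\ \alpha\in[0,1]).
\end{equation*}
For $\alpha\in\{0,1\}$ this holds with equality, by the classical identity $u_k-u_{k+1}=$ (first-return probability at time $2k+2$), where $u_k=\binom{2k}{k}4^{-k}$. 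For $k=0$ it reads $1-\qa_2\ge \qa_2$, which is true because $\qa_2=\half(\alpha^2+(1-\alpha)^2)\le\half$.

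\emph{Step 2 (the termwise inequality).} This is the main obstacle. The probabilistic reading is $\MP(S^\alpha_{2k}=\zero_2)\ge \MP(S^\alpha_{2k+2}=\zero_2)+\MP(\Ta=2k+2)$. I would attack it in two ways.

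The first is a path decomposition. Split $\{S^\alpha_{2k+2}=\zero_2\}$ according to the position at time $2k$: it must be $\zero_2$ or a neighbour of $\zero_2$ at distance $2$. Then compare the contribution of each such state with the probability of $\{S^\alpha_{2k}=\zero_2\}$, arguing that the excess of the latter over the former dominates the first-return mass at time $2k+2$.

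The second, which I would try first because it is explicit, is a direct computation. Write $\qa_{2k}=\sum_{\ell}\binom{2k}{2\ell}\alpha^{2\ell}(1-\alpha)^{2k-2\ell}u_\ell u_{k-\ell}$, i.e.\ condition on the number of steps taken in each coordinate. Then apply Lemma \ref{Lem1} (with $d=2$, where $\fa_{2k}=2\alpha(1-\alpha)\MP(S^\alpha_{2k}=\bfe_2^1+\bfe_2^2)$) to express $\qa_{2k+2}$ through $\qa_{2k}$ and $\fa_{2k}$. That reduces the needed bound to an upper bound on $\fa_{2k}$ and $\pa_{2k+2}$ in terms of $\qa_{2k}$.

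\emph{Fallback.} If the termwise inequality turns out to be false for some $k$, I would drop it and prove only the weighted sum inequality from Step 1. For this I would use the Fourier representation $\qa_{2k}=\mathbb{E}[\phi^{2k}]$, with $\phi=\alpha\cos U+(1-\alpha)\cos V$ and $U,V$ independent uniform on $[0,2\pi]$. Then $\ga_n=\mathbb{E}[g_n(A,B)]$ with $g_n(a,b)=\sum_{k=0}^n a^kb^{n-k}$ and $A,B$ i.i.d.\ copies of $\phi^2$. The target becomes
\[
\mathbb{E}[(1-A)g_n(A,B)]\ge \mathbb{E}[B^{n+1}].
\]
I would symmetrize in $(A,B)$ and exploit that the law of $\phi^2$ is more concentrated near $0$ than the arcsine law of $\cos^2 V$, for which equality holds.
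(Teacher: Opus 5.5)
\textbf{There is a genuine gap: your termwise inequality is false, and the fallback does not replace it.}

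Your Step 1 algebra is correct, and the termwise inequality $q^{\alpha}_{2k}-q^{\alpha}_{2k+2}\ge p^{\alpha}_{2k+2}$ would suffice. But it does not hold, so the main route cannot work.

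Take $\alpha=\frac12$, the planar simple random walk, where $q^{\alpha}_{2k}=\bigl(\binom{2k}{k}4^{-k}\bigr)^2$. Then $q^{\alpha}_6=25/256$ and $q^{\alpha}_8=1225/16384$. The renewal identity (\ref{eq1}) gives $p^{\alpha}_2=1/4$, $p^{\alpha}_4=5/64$, $p^{\alpha}_6=11/256$ and $p^{\alpha}_8=469/16384$. Hence $q^{\alpha}_6-q^{\alpha}_8=375/16384<469/16384=p^{\alpha}_8$; already at $k=2$ the inequality is an equality.

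This is not a small-$k$ accident. For every $\alpha\in(0,1)$ the walk is a recurrent planar walk. Summing your inequality over $k\ge K$ would force $q^{\alpha}_{2K}\ge\mathbb{P}(T^{\alpha}>2K)$ for all large $K$. That is incompatible with $q^{\alpha}_{2K}=O(1/K)$ and the merely logarithmic decay of the return-time tail of a recurrent planar walk.

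What Lemma \ref{Lem1} actually gives, together with $q^{\alpha}_{2k}\ge\mathbb{P}(S^{\alpha}_{2k}=(1,1))$, is the much weaker bound $q^{\alpha}_{2k}-q^{\alpha}_{2k+2}\ge q^{\alpha}_{2k+2}/(2k+1)$. The right side equals $p^{\alpha}_{2k+2}$ only when $\alpha\in\{0,1\}$. In two dimensions $p^{\alpha}_{2k+2}$ is eventually far larger, so neither your path decomposition nor your Lemma \ref{Lem1} computation can prove the termwise bound.

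\textbf{The fallback is a restatement, not an argument.} The reformulation $\mathbb{E}[(1-A)g_n(A,B)]\ge\mathbb{E}[B^{n+1}]$ is correct, but it is the lemma itself. The integrand $(1-a)g_n(a,b)-b^{n+1}$ takes both signs on $[0,1]^2$: it equals $-1$ at $a=b=1$ and $b^n(1-b)>0$ at $a=0$. The functional is also quadratic in the law of $\phi^2$. So a statement that this law is ``more concentrated near $0$'' than the arcsine law does not by itself give the inequality, and you supply no mechanism. Any mechanism must also work uniformly in $n$ and be tight at $\alpha\in\{0,1\}$.

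\textbf{The missing idea, as the paper supplies it.} Do not bring $p^{\alpha}$ in at all.
\begin{enumerate}
\item Split $\theta^{\alpha}_{n-1}-\theta^{\alpha}_n$ symmetrically about the middle index. For $n=2m$, write $\theta^{\alpha}_{2m-1}-\theta^{\alpha}_{2m}=2\sum_{k<m}q^{\alpha}_{2k}\bigl(q^{\alpha}_{4m-2k-2}-q^{\alpha}_{4m-2k}\bigr)-(q^{\alpha}_{2m})^2$. Only decrements of the \emph{longer} factor appear.
\item Bound each decrement by the weak estimate $q^{\alpha}_{2j}-q^{\alpha}_{2j+2}\ge q^{\alpha}_{2j+2}/(2j+1)$.
\item Bound each product from below: $q^{\alpha}_{2k}q^{\alpha}_{4m-2k}\ge (q^{\alpha}_{2m})^2\binom{2k}{k}\binom{4m-2k}{2m-k}/\binom{2m}{m}^2$. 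This comes from telescoping a ratio inequality, which follows from Lemma \ref{Lem1} and the monotonicity of $f^{\alpha}_{2n}/q^{\alpha}_{2n}$ in $n$ (Lemma \ref{Lem4}, a Cauchy--Schwarz argument).
\item The result is a constant times $(q^{\alpha}_{2m})^2$. That constant is exactly zero, because every step is an equality when $\alpha\in\{0,1\}$, where $\theta^{\alpha}_n\equiv 1$.
\end{enumerate}
The odd case $n=2m+1$ is handled the same way.
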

By (\ref{eq14}), 
\begin{align}
\sum_{n=1}^\infty (\sum_{k=1}^n \Da \pa_{2k}) z^{n}&=(\sum_{n=1}^\infty \Da \pa_{2n}\; z^{n})(\sum_{n=0}^\infty
z^{n})\label{eq15}\\
&=(\sum_{n=1}^\infty \Da \pa_{2n}\; z^{n})/(1-z)\notag\\
&=\frac{\sum_{n=1}^\infty \Da \qa_{2n}\; z^{n}}{(1-z) \sum_{n=0}^\infty \ga_{n}\;z^{n}}
\notag\\
&=\frac{\sum_{n=1}^\infty \Da \qa_{2n}\; z^{n}}{1-\sum_{n=1}^\infty (\ga_{n-1}-\ga_{n}) z^{n}}\notag\\
&=\big[\sum_{n=1}^\infty \Da \qa_{2n}\; z^{n}\big] \;\big[\sum_{k=0}^\infty 
\big(Z^\alpha(z)\big)^k\big],\label{eq16}
\end{align}
where $Z^\alpha(z)=\sum_{n=1}^\infty (\ga_{n-1}-\ga_{n}) z^{n}$. Note that by Lemma \ref{Lem3},
for $|z|<1$, 
\begin{align*}
|Z^\alpha(z)|\leq \sum_{n=1}^\infty (\ga_{n-1}-\ga_n) |z|^n\leq \sum_{n=1}^\infty
(\ga_{n-1}-\ga_n) |z|=|z|<1.
\end{align*}

The left-hand side of (\ref{eq15}) is a power series in $z$ with coefficients
$\sum_{k=1}^n \Da \pa_{2k}$, $n\geq 1$. By Lemma \ref{Lem2}, for $\alpha \in [0,\half]$,
 the power series 
$\sum_{n=1}^\infty \Da \qa_{2n}\; z^{n}$ (the first term on the right-hand side of
(\ref{eq16})) has non-positive coefficients. By Lemma \ref{Lem3}, 
for $\alpha \in [0,1]$,
the power series $Z^\alpha(z)=\sum_{n=1}^\infty (\ga_{n-1}-\ga_{n}) z^{n}$ has non-negative coefficients, so that
 $\sum_{k=0}^\infty \big(Z^\alpha(z)\big)^k$ (the second term on the right-hand side of
(\ref{eq16})) is also a power series in $z$ with non-negative
 coefficients. It follows that the right-hand side of (\ref{eq16}) is a power series
 in $z$ with non-positive coefficients. Hence by (\ref{eq15}) and (\ref{eq16}), 
 the coefficients of the power series 
 $\sum_{n=1}^\infty (\sum_{k=1}^n \Da \pa_{2k}) z^{n}$ are non-positive for 
 $\alpha \in [0,\half]$, establishing
 (\ref{eqd=2.1}) and completing the proof of Theorem \ref{Thm1} for $d=2$. It remains to prove Lemmas \ref{Lem2} and \ref{Lem3}.
 
\begin{proof}[Proof of Lemma \ref{Lem2}]
We prove the lemma by induction on $n$. For $n=1$,
\begin{align}\label{eq16.1}
\Da \qa_2=\Da (\half \alpha^2+\half (1-\alpha)^2)=(2\alpha-1)< 0\;\;\text{for}\;\; \alpha<\half.
\end{align}
For the induction step, we need the following result (to be proved later)
\begin{align}\label{ID}
\Da \fa_{2n}=-2\alpha (1-\alpha) \Da \qa_{2n}.
\end{align}
Suppose the lemma holds for $n=m\geq 1$, i.e.
$\Da \qa_{2m}< 0$. 
By Lemma \ref{Lem1} and (\ref{eq16.1})--(\ref{ID}),
\begin{align*}
\Da (\frac{2m+2}{2m+1}\; \qa_{2m+2})&=\Da (2\;\qa_2\;\qa_{2m}+ \fa_{2m})\\
&=2 (\Da \qa_2) \qa_{2m}+ 2 (\half \alpha^2+\half (1-\alpha)^2) \Da \qa_{2m}
-2 \alpha (1-\alpha) \Da \qa_{2m}\\
&=2 (\Da \qa_2) \qa_{2m}+(2\alpha-1)^2 \;\Da \qa_{2m}< 0,
\end{align*}
completing the induction step. 

It remains to prove (\ref{ID}).
 For $\mathbf{h}=(\alpha, 1-\alpha)$ and $d=2$, 
\begin{align}
\MP(\Sh_{2n}&=\bfe_d^{1}+\bfe_d^2)
= \MP(\Sa_{2n}=(1,1))\notag\\
&=\sum_{k=1}^n \frac{(2n)!}{(k-1)!\; k!\; (n-k)!\; (n-k+1)!} (\half \alpha)^{2k-1}
(\half (1-\alpha))^{2n-2k+1}. \label{eq16.11}
\end{align}
By (\ref{eq3}),
\begin{align*}
\fa_{2n}&(=\fh_{2n})=2 \alpha (1-\alpha) \MP(\Sa_{2n}=(1,1))\\
&=(\half)^{2n-1} \sum_{k=1}^n \frac{(2n)!}{(k-1)!\; k!\; (n-k)!\; (n-k+1)!} \alpha^{2k}
(1-\alpha)^{2n-2k+2}.
\end{align*}
Then
\begin{align}
\Da \fa_{2n}&=(\half)^{2n-1} \Big(\sum_{k=1}^n \frac{2(2n)!}{[(k-1)!]^2 (n-k)!\;(n-k+1)!} \alpha^{2k-1}
(1-\alpha)^{2n-2k+2}\notag\\
&\qquad \qquad \qquad-\sum_{k=1}^n \frac{2(2n)!}{(k-1)!\;k!\; [(n-k)!]^2} \alpha^{2k}
(1-\alpha)^{2n-2k+1}\Big)\notag\\
&=2 \alpha (1-\alpha) (\half)^{2n} \Big(\sum_{k=1}^n \frac{2(2n)!}{[(k-1)!]^2 (n-k)!\;(n-k+1)!} \alpha^{2k-2} (1-\alpha)^{2n-2k+1}\notag\\
&\qquad \qquad \qquad \qquad \qquad -\sum_{k=1}^n \frac{2(2n)!}{(k-1)!\;k!\; [(n-k)!]^2} \alpha^{2k-1}
(1-\alpha)^{2n-2k}\Big)\;.\label{eq16.2}
\end{align}
On the other hand,
\begin{align}
\Da \qa_{2n}&=(\half)^{2n} \Da\Big(\frac{(2n)!}{(n!)^2} (1-\alpha)^{2n}+\sum_{0< k<n} \frac{(2n)!}{(k!)^2 [(n-k)!]^2} \alpha^{2k} 
(1-\alpha)^{2n-2k} +\frac{(2n)!}{(n!)^2} \alpha^{2n}\Big)\notag\\
&=(\half)^{2n} \Big(-\frac{2(2n)!}{(n-1)!\; n!} (1-\alpha)^{2n-1}+\sum_{0< k<n} \frac{2 (2n)!}{(k-1)!\; k! \;[(n-k)!]^2}\alpha^{2k-1} (1-\alpha)^{2n-2k}\notag\\
&\qquad \qquad \qquad -\sum_{0< k <n}\frac{2(2n)!}{(k!)^2 (n-k)!\; (n-k-1)!} \alpha^{2k} (1-\alpha)^{2n-2k-1}
+ \frac{2(2n)!}{(n-1)!\; n!} \alpha^{2n-1}\Big)\notag\\
&=(\half)^{2n}\Big(-\sum_{k=0}^{n-1} \frac{2(2n)!}{(k!)^2 (n-k)!\; (n-k-1)!} 
\alpha^{2k} (1-\alpha)^{2n-2k-1}\notag\\
&\qquad \qquad \qquad+ \sum_{k=1}^n \frac{2(2n)!}{(k-1)!\;k!\; [(n-k)!]^2} 
\alpha^{2k-1} (1-\alpha)^{2n-2k} \Big). \label{eq16.3}
\end{align}
Now (\ref{ID}) follows from (\ref{eq16.2}) and (\ref{eq16.3}).
The proof is complete.
\end{proof}

Next we prove Lemma \ref{Lem3}. As a part of the long proof, we first 
establish some auxiliary results as stated in Lemmas \ref{Lem4} and \ref{Lem5} below.
\begin{lemma}\label{Lem4}
(i) For $\alpha=0, 1$, $\MP(\Sa_{2n}=(1,1))=0$ for all $n$.

(ii) For $\alpha \in (0,1)$ and 
$n>m\geq 0$, 
\begin{align*}
\frac{\MP(\Sa_{2n}=(1,1))}{\qa_{2n}}\geq 
\frac{\MP(\Sa_{2m}=(1,1))}{\qa_{2m}}\;.
\end{align*}

(iii)
For $\alpha \in [0,1]$ and
 $n>m\geq 0$, $\fa_{2n}/\qa_{2n} \geq \fa_{2m}/\qa_{2m}$.
The inequality is an equality for $\alpha=0, 1$.
\end{lemma}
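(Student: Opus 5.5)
Part (i) is immediate. For $\alpha\in\{0,1\}$ the walk moves in only one coordinate, so it can never reach $(1,1)$.

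Part (iii) follows from (i) and (ii). By (\ref{eq3}) with $d=2$ we have $\fa_{2n}=2\alpha(1-\alpha)\,\MP(\Sa_{2n}=(1,1))$, so $\fa_{2n}/\qa_{2n}=2\alpha(1-\alpha)\,\MP(\Sa_{2n}=(1,1))/\qa_{2n}$. For $\alpha\in(0,1)$ the prefactor $2\alpha(1-\alpha)$ is a positive constant independent of $n$, and (ii) gives the inequality. For $\alpha=0,1$ both sides vanish by (i), so equality holds. (Here $\qa_{2m}>0$ for all $m$, and for $m=0$ the left quotient is $0$, which is trivially dominated.) The real work is therefore (ii).

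For (ii) I will use the Fourier representation. Write $\phi(\theta_1,\theta_2)=\alpha\cos\theta_1+(1-\alpha)\cos\theta_2$, the characteristic function of one increment. For $x\in\mathbb{Z}^2$,
\[
\MP(\Sa_{2n}=x)=\frac{1}{(2\pi)^2}\int_{[-\pi,\pi]^2}\phi^{2n}\cos(x_1\theta_1)\cos(x_2\theta_2)\,\mathrm{d}\theta .
\]
Hence $\MP(\Sa_{2n}=(1,1))/\qa_{2n}=E_{\mu_n}[\cos\theta_1\cos\theta_2]$, where $\mu_n$ is the probability measure on $[-\pi,\pi]^2$ with density proportional to $\phi^{2n}$. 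Since $\mu_{m+1}$ has density proportional to $\phi^2$ with respect to $\mu_m$, it suffices to treat $n=m+1$ and then iterate. The one-step claim is the covariance inequality
\[
E_{\mu_m}[\cos\theta_1\cos\theta_2\,\phi^2]\;\geq\;E_{\mu_m}[\cos\theta_1\cos\theta_2]\,E_{\mu_m}[\phi^2].
\]

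To prove this I substitute $u=\cos\theta_1$ and $v=\cos\theta_2$. This turns $\mu_m$ into a measure on $[-1,1]^2$ with density proportional to $(\alpha u+(1-\alpha)v)^{2m}w(u)w(v)$, where $w$ is the arcsine weight. The inequality then says that $uv$ and $(\alpha u+(1-\alpha)v)^2$ are positively correlated under this measure. Both functions and the measure are invariant under $(u,v)\mapsto(-u,-v)$. My plan is to fold by this symmetry and split according to the sign pattern of $(u,v)$. On the quadrants $\{u,v\geq0\}$ and $\{u,v\le 0\}$, after the reflection, both $uv$ and $(\alpha u+(1-\alpha)v)^2$ are coordinatewise increasing. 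On the mixed quadrants, $uv\le0$ and $\phi^2$ is smaller relative to $|u|,|v|$. The measure on each quadrant is a product weight times $\phi^{2m}$, and I expect $\phi^{2m}$ to be log-supermodular there, since it is a power of a linear form with nonnegative coefficients. That would allow an FKG/Harris inequality within the positive quadrant.

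The main obstacle is combining the quadrants. A cruder but possibly cleaner fallback is a direct discrete route. Condition on the number $2k$ (respectively $2k-1$) of horizontal steps, and write both probabilities as binomial mixtures of products of 1D probabilities $\binom{2k}{k}4^{-k}$ and $\binom{2k-1}{k}2^{-(2k-1)}$. The needed monotonicity then becomes a total-positivity statement about these mixtures, which I would try to verify through the ratios $\binom{2k-1}{k}2^{-(2k-1)}\big/\binom{2k}{k}4^{-k}=1$ and term-by-term comparison of the binomial weights.
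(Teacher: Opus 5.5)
Your parts (i) and (iii) are fine and match the paper. The gap is in (ii), which carries all the content of the lemma, and your proposal does not prove it.

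\textbf{Why (ii) is not established.} First, the covariance inequality is not a reduction. Once the normalizations are multiplied out, it says exactly $\MP(\Sa_{2m+2}=(1,1))\,\qa_{2m}\ge\MP(\Sa_{2m}=(1,1))\,\qa_{2m+2}$, which is the one-step form of (ii) itself.

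Second, the tool you propose for it rests on a false expectation. On $\{u,v\ge 0\}$ we have $\partial_u\partial_v\log(\alpha u+(1-\alpha)v)=-\alpha(1-\alpha)/(\alpha u+(1-\alpha)v)^2<0$. So $(\alpha u+(1-\alpha)v)^{2m}$ is log-\emph{sub}modular, not log-supermodular. The hypothesis of the Harris--FKG inequality therefore fails, and it yields nothing even inside the positive quadrant.

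Third, folding over quadrants leaves a within-mixed-quadrant term in the law of total covariance. There neither $uv$ nor $\phi^2$ is monotone, and you give no way to control its sign; you acknowledge this yourself.

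The discrete fallback is only an intention. The identity $\binom{2k-1}{k}2^{-(2k-1)}=\binom{2k}{k}4^{-k}$ is correct. It does not by itself give monotonicity in $n$ of a ratio of two binomial mixtures that run over different parities of the number of horizontal steps.

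\textbf{The missing idea.} You need to collapse the problem to one variable, where the covariance becomes a variance. The paper does this as follows.
\begin{itemize}
\item Since $\frac{(2n)!}{(k!)^2((n-k)!)^2}=\binom{2n}{n}\binom{n}{k}^2$, and similarly for the $(1,1)$ term, one gets $\qa_{2n}=4^{-n}\binom{2n}{n}A_n$ and $\MP(\Sa_{2n}=(1,1))=4^{-n}\binom{2n}{n}B_n$. Here $a_{n,k}=\binom nk\alpha^k(1-\alpha)^{n-k}$, $A_n=\sum_k a_{n,k}^2$ and $B_n=\sum_k a_{n,k}a_{n,k+1}$. The common prefactor cancels in the ratio.
\item Pascal's rule $a_{n+1,k}=(1-\alpha)a_{n,k}+\alpha a_{n,k-1}$ gives linear recursions for $A_{n+1},B_{n+1}$ in terms of $A_n$, $B_n$ and $C_n=\sum_k a_{n,k}a_{n,k+2}$.
\item These yield $A_nB_{n+1}-A_{n+1}B_n=\alpha(1-\alpha)[A_n(A_n+C_n)-2B_n^2]$, and the bracket is nonnegative by Cauchy--Schwarz.
\end{itemize}

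Your Fourier instinct does work at this one-dimensional level. By Parseval, $B_n/A_n$ is the mean of $\cos t$ under the density proportional to $\rho(t)^n$ on $[-\pi,\pi]$, where $\rho(t)=|\alpha e^{it}+1-\alpha|^2=\alpha^2+(1-\alpha)^2+2\alpha(1-\alpha)\cos t$. Because $\rho$ is an increasing affine function of $\cos t$, the one-step covariance equals $2\alpha(1-\alpha)$ times the variance of $\cos t$. This is exactly the paper's bracket $A_n(A_n+C_n)-2B_n^2$, up to the factor $2A_n^2$.

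In your two-dimensional integral the same reduction is available. Use the kernel $\cos(\theta_1-\theta_2)$ of the equiprobable point $(1,-1)$ and substitute $\theta_1=a+b$, $\theta_2=a-b$. Then $\phi=\mathrm{Re}\,[e^{ia}(\cos b+i(2\alpha-1)\sin b)]$, the $a$-integral factors out, and what remains is $\rho(2b)^n$.
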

\begin{proof}[Proof of Lemma \ref{Lem4}]
(i) For $\alpha=0, 1$, the random walk $(\Sa_n)_{n\geq 0}$ is  1-dimensional so that it can
never visit the point $(1,1)$. It follows that $\MP(\Sa_{2n}=(1,1))=0$ for all $n$.

(ii) 
For $\alpha \in (0,1)$, let
\begin{align}\label{Lem4.1}
a_{n,k}\;(=a_{n,k}^\alpha)=\binom{n}{k} \alpha^k  (1-\alpha)^{n-k}\;\;\text{for}\;\;0\leq k\leq n,
\end{align}
and $a_{n,k}=0$ for $k<0$ or $k>n$. Then
\begin{align}
\qa_{2n}&=\sum_{k=0}^n \frac{(2n)!}{[k!\;(n-k)!]^2} (\half \alpha)^{2k} (\half (1-\alpha))^{2n-2k}
\notag\\
&=(\half)^{2n} \binom{2n}{n} \sum_{k=0}^{n} \binom{n}{k}^2 \alpha^{2k} (1-\alpha)^{2n-2k}\notag\\
&=(\half)^{2n} \binom{2n}{n} \sum_{k=0}^{n} (a_{n,k})^2.\label{Lem4.2}
\end{align}
By (\ref{eq16.11}),
\begin{align}
\MP(\Sa_{2n}=(1,1))&=(\half)^{2n} \binom{2n}{n} 
\sum_{k=1}^{n} \binom{n}{k-1} \binom{n}{k} 
\alpha^{2k-1} (1-\alpha)^{2n-2k+1}\notag\\
&=(\half)^{2n} \binom{2n}{n}  \sum_{k=1}^{n} a_{n,k-1}\; a_{n,k}\notag\\
&=(\half)^{2n} \binom{2n}{n}  \sum_{k=0}^{n-1} a_{n,k}\; a_{n,k+1}.\label{Lem4.3}
\end{align}
Let $A_n=\sum_{k=0}^n (a_{n,k})^2, B_n=\sum_{k=0}^{n-1} a_{n,k}\;a_{n,k+1}$ and
$C_n=\sum_{k=0}^{n-2} a_{n,k} \; a_{n,k+2}$.  Then 
\begin{align*}
\qa_{2n}=(\half)^{2n} \binom{2n}{n} A_n\;\;\text{and}\;\; \MP(\Sa_{2n}=(1,1))
=(\half)^{2n} \binom{2n}{n}  B_n,
\end{align*}
so that
\begin{align*}
\frac{\MP(\Sa_{2(n+1)}=(1,1))}{\qa_{2(n+1)}}
-\frac{\MP(\Sa_{2n}=(1,1))}{\qa_{2n}}=
\frac{A_n B_{n+1}-A_{n+1}B_n}{A_n A_{n+1}}.
\end{align*}
We claim that
\begin{align}\label{eq17}
A_n B_{n+1}- A_{n+1}B_n\geq 0,
\end{align}
which implies
\begin{align*}
\frac{\MP(\Sa_{2(n+1)}=(1,1))}{\qa_{2(n+1)}}
\geq \frac{\MP(\Sa_{2n}=(1,1))}{\qa_{2n}},
\end{align*}
proving (ii). 
It remains to prove (\ref{eq17}).
We have 
\begin{align*}
a_{n+1,k}&=\binom{n+1}{k} \alpha^k (1-\alpha)^{n+1-k}
=\Big(\binom{n}{k}+\binom{n}{k-1}\Big) 
\alpha^k (1-\alpha)^{n-k+1}\\
&=(1-\alpha) a_{n,k}+\alpha a_{n,k-1},
\end{align*}
and 
\begin{align*}
a_{n+1,k+1}=(1-\alpha) a_{n,k+1}+\alpha a_{n,k}.
\end{align*}
So
\begin{align*}
A_{n+1}&=\sum_{k=0}^{n+1} (a_{n+1,k})^2=\sum_{k=0}^{n+1} ((1-\alpha) a_{n,k}+\alpha\; a_{n,k-1})^2\\
&=(1-\alpha)^2 \sum_{k=0}^n (a_{n,k})^2 +\alpha^2 \sum_{k=1}^{n+1} (a_{n,k-1})^2
+2\alpha(1-\alpha) \sum_{k=1}^{n}
a_{n,k}\; a_{n,k-1}\\
&=(\alpha^2+(1-\alpha)^2) A_n+2\alpha(1-\alpha) B_n,\\
B_{n+1}&=\sum_{k=0}^n a_{n+1,k}\;a_{n+1,k+1}\\
&=\sum_{k=0}^n ((1-\alpha)a_{n,k}+\alpha\;a_{n,k-1})((1-\alpha)a_{n,k+1}+\alpha\; a_{n,k})\\
&=(\alpha^2+(1-\alpha)^2) B_n+\alpha (1-\alpha)(A_n+C_n).
\end{align*}
Next we show 
\begin{align}\label{eq18}
A_n(A_n +C_n) \geq 2 B_n^2,
\end{align}
which implies (\ref{eq17}).
We have
\begin{align*}
A_n(A_n+C_n)&=\sum_{k=0}^n (a_{n,k})^2 \Big[\sum_{k=0}^n (a_{n,k})^2+\sum_{k=0}^{n-2} a_{n,k}\; a_{n,k+2}\Big]\\
&=\half \sum_{k=0}^n (a_{n,k})^2 \Big[ (a_{n,0})^2+(a_{n,1})^2+\sum_{k=0}^{n-2}
(a_{n,k}+a_{n,k+2})^2+(a_{n,n-1})^2+(a_{n,n})^2\Big]\\
&\geq \half \sum_{k=0}^n (a_{n,k})^2 \Big[ (a_{n,1})^2+\sum_{k=1}^{n-1}
(a_{n,k-1}+a_{n,k+1})^2+(a_{n,n-1})^2\Big]\\
&\geq \half \Big[a_{n,0}\;a_{n,1}+\sum_{k=1}^{n-1}a_{n,k}(a_{n,k-1}+a_{n,k+1})+a_{n,n}\;a_{n,n-1}\Big]^2
\\
&=\half (2 \sum_{k=0}^{n-1} a_{n,k}\;a_{n,k+1})^2=2B_n^2,
\end{align*}
where the second inequality is an application of
 Cauchy-Schwarz inequality. This proves (\ref{eq18}).
 
 (iii) Since $\fa_{2n}=2\alpha (1-\alpha) \MP(\Sa_{2n}=(1,1))$,
 part (iii) follows from parts (i) and (ii).
\end{proof}
\begin{lemma}\label{Lem5}
\begin{align*}
(i)&\;\; \text{For}\; \alpha \in [0,1]\;\text{and}\; n\geq 0,\\
&\qquad \qquad \qa_{2n}=\MP(\Sa_{2n}=(0,0))\geq
\MP(\Sa_{2n}=(1,1)).\\
(ii)&\;\;\text{For}\; \alpha \in [0,1]\;\text{and}\;n\geq 0,\\ 
&\qquad \qquad \qa_{2n}-\qa_{2n+2}\geq \frac{1}{2n+1}\; \qa_{2n+2}.\\
&\;\;\text{The inequality is an equality for}\; \alpha=0, 1.\\
(iii)&\;\;\text{For}\;\alpha \in [0,1]\;\text{and}\;0\leq k \leq \frac{n}{2}-1,\\
&\qquad \qquad \frac{\qa_{2k}\; \qa_{2n-2k}}{\qa_{2k+2}\;\qa_{2n-2k-2}} \geq \frac{2k+2}{2k+1}\;\frac{2n-2k-1}{2n-2k}.
\\
& \;\;\text{The inequality} \;\text{is an equality for}\;\alpha=0, 1.
\\
(iv)& \;\; \text{For}\;\alpha \in [0,1],\;(\text{even})\; n=2m\geq 2\;\text{and}\; 0\leq k<m, \\
&\qquad \qquad \frac{\qa_{2k}\;\qa_{4m-2k}}{(\qa_{2m})^2}\geq \frac{\binom{2k}{k}\binom{4m-2k}{2m-k}}{\binom{2m}{m}^2}.\\
&\;\;\text{The inequality is an equality for}\; \alpha=0, 1.\\
(v)& \;\; \text{For}\;\alpha \in[0,1],\;(\text{odd})\; n=2m+1\geq 3\;
\text{and}\; 0\leq k<m,\\
&\qquad \qquad \frac{\qa_{2k}\;\qa_{4m-2k+2}}{\qa_{2m}\;\qa_{2m+2}}\geq 
\frac{\binom{2k}{k}\binom{4m-2k+1}{2m-k}}{\binom{2m}{m}\;\binom{2m+1}{m}}.\\
&\;\; \text{The inequality is an equality for}\;
\alpha=0, 1.
\end{align*}
\end{lemma}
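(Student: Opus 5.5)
We prove the five parts in order, each feeding the next, using the representations $\qa_{2n}=(\half)^{2n}\binom{2n}{n}A_n$ and $\MP(\Sa_{2n}=(1,1))=(\half)^{2n}\binom{2n}{n}B_n$ from the proof of Lemma \ref{Lem4}.

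\emph{Part (i).} It suffices to show $B_n\le A_n$. This follows from $a_{n,k}a_{n,k+1}\le \half(a_{n,k}^2+a_{n,k+1}^2)$ summed over $k$, which gives $B_n\le A_n-\half(a_{n,0}^2+a_{n,n}^2)\le A_n$. For $\alpha=0,1$ both sides are handled by Lemma \ref{Lem4}(i).

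\emph{Part (ii).} Use Lemma \ref{Lem1} with $d=2$:
\[
\frac{2n+2}{2n+1}\qa_{2n+2}=2\qa_2\qa_{2n}+\fa_{2n},\qquad \fa_{2n}=2\alpha(1-\alpha)\MP(\Sa_{2n}=(1,1))\le 2\alpha(1-\alpha)\qa_{2n}
\]
by (i). Since $2\qa_2=\alpha^2+(1-\alpha)^2$, we get $\frac{2n+2}{2n+1}\qa_{2n+2}\le(\alpha^2+(1-\alpha)^2+2\alpha(1-\alpha))\qa_{2n}=\qa_{2n}$. This is exactly (ii). At $\alpha=0,1$ we have $\fa=0$ and $2\qa_2=1$, so equality holds.

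\emph{Part (iii).} Write $r_j=\qa_{2j}/\qa_{2j+2}$. By (ii), $r_j\ge\frac{2j+2}{2j+1}$, with equality at the endpoints. The left side of (iii) equals $r_k\cdot \qa_{2n-2k}/\qa_{2n-2k-2}=r_k/r_{n-k-1}$, so we need an upper bound on $r_{n-k-1}$, which is the opposite direction. The right side equals $\frac{2k+2}{2k+1}\big/\frac{2(n-k-1)+2}{2(n-k-1)+1}$. So (iii) is equivalent to $\rho_k\ge \rho_{n-k-1}$, where $\rho_j=r_j\frac{2j+1}{2j+2}\ge1$ and $k<n-k-1$.

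It therefore suffices to show that $\rho_j$ is nonincreasing in $j$. From Lemma \ref{Lem1},
\[
\frac{1}{\rho_j}=\frac{2j+2}{2j+1}\frac{\qa_{2j+2}}{\qa_{2j}}=2\qa_2+2\alpha(1-\alpha)\frac{\MP(\Sa_{2j}=(1,1))}{\qa_{2j}}.
\]
By Lemma \ref{Lem4}(ii) this is nondecreasing in $j$, so $\rho_j$ is nonincreasing, as required. This monotonicity step is the crux of the whole lemma.

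\emph{Parts (iv) and (v).} These follow by telescoping (iii). For (iv), write
\[
\frac{\qa_{2k}\qa_{4m-2k}}{(\qa_{2m})^2}=\prod_{j=k}^{m-1}\frac{\qa_{2j}\qa_{4m-2j}}{\qa_{2j+2}\qa_{4m-2j-2}},
\]
and apply (iii) with $n=2m$ and index $j\le m-1=\frac n2-1$. The product of the right-hand sides $\frac{2j+2}{2j+1}\frac{4m-2j-1}{4m-2j}$ telescopes to the stated ratio of central binomial coefficients. The reason is that $\binom{2j}{j}(\half)^{2j}$ has exactly the ratio $\frac{2j+1}{2j+2}$, since this is the one-dimensional case $\alpha=0$ where equality holds.

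For (v), the analogous product with $n=2m+1$ runs over $j=k,\dots,m-1$, where $j\le \frac{n}{2}-1$ holds. The products evaluated at $\alpha=0$ give the binomial expression, which after cancelling powers of $\half$ is the stated right side. Equality at $\alpha=0,1$ is inherited from (iii).

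The main obstacle is recognizing in (iii) that one needs monotonicity of the ratio $\rho_j$ rather than the bound from (ii) alone. Lemma \ref{Lem4}(ii), combined with Lemma \ref{Lem1}, supplies this monotonicity.
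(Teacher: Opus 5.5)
Your proof is correct and follows the paper's argument essentially step for step. Part (i) uses $a_{n,k}a_{n,k+1}\le\frac12(a_{n,k}^2+a_{n,k+1}^2)$, (ii) uses Lemma~\ref{Lem1} with (i), (iii) applies Lemma~\ref{Lem1} at indices $k$ and $n-k-1$ together with the monotonicity of $f^{\alpha}_{2j}/q^{\alpha}_{2j}$ (your $\rho_j$ is a repackaging of the paper's comparison of (\ref{eq20}) and (\ref{eq21})), and (iv)--(v) telescope (iii).

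Two cosmetic points:
\begin{itemize}
\item Lemma~\ref{Lem4}(ii) is stated only for $\alpha\in(0,1)$, so at $\alpha=0,1$ cite Lemma~\ref{Lem4}(iii) or note that $2\alpha(1-\alpha)=0$.
\item In (v), matching the stated right-hand side also needs $\binom{2N}{N}=2\binom{2N-1}{N-1}$, applied to $\binom{4m-2k+2}{2m-k+1}$ and $\binom{2m+2}{m+1}$; cancelling the powers of $\frac12$ is not enough on its own.
\end{itemize}
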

\begin{proof}[Proof of Lemma \ref{Lem5}]
(i) For $\alpha=0,1,\; \MP(\Sa_{2n}=(1,1))=0<\MP(\Sa_{2n}=(0,0))$.
For $\alpha \in (0,1)$, by (\ref{Lem4.2}) and (\ref{Lem4.3}), 
\begin{align*}
\MP(\Sa_{2n}=(0,0))&- \MP(\Sa_{2n}=(1,1))\\
&=(\half)^{2n} \binom{2n}{n} \Big[
\sum_{k=0}^n (a_{n,k})^2-\sum_{k=0}^{n-1} a_{n,k}\;a_{n,k+1}\Big]\\
&\geq (\half)^{2n} \binom{2n}{n} \Big[\sum_{k=0}^n (a_{n,k})^2-\sum_{k=0}^{n-1}(\half)
((a_{n,k})^2+(a_{n,k+1})^2)\Big]\\
&\geq 0,
\end{align*}
where the first inequality is an application of  Cauchy-Schwarz inequality, proving (i). 

(ii) For  $\alpha=0, 1$,  we have $\qa_2=\half$ and $\fa_{2n}=2 \alpha (1-\alpha) 
\MP(\Sa_{2n}=(1,1))=0$. By Lemma \ref{Lem1},
$\frac{2n+2}{2n+1}\;\qa_{2n+2}=2 \qa_2 \;\qa_{2n}=\qa_{2n}$, proving that
the inequality in part (ii) is an equality for $\alpha=0, 1$.

For $\alpha \in (0,1)$, by part (i),
\begin{align*}
2\alpha(1-\alpha)\qa_{2n}-\fa_{2n}&=2 \alpha (1-\alpha)\big[\MP(\Sa_{2n}=(0,0))-
\MP(\Sa_{2n}=(1,1))\big]\\
&\geq 0.
\end{align*} 
By Lemma \ref{Lem1},
\begin{align*}
\frac{2n+2}{2n+1}\;\qa_{2n+2}&=2\qa_2\;\qa_{2n}+\fa_{2n}\\
&=2\qa_2\;\qa_{2n}+2\alpha(1-\alpha) \qa_{2n}-(2\alpha(1-\alpha)\qa_{2n}-\fa_{2n})\\
&\leq (2\qa_2+2\alpha(1-\alpha))\qa_{2n}=\qa_{2n},
\end{align*}
proving (ii).

(iii) By Lemma \ref{Lem1}, 
\begin{align}
\frac{2k+2}{2k+1}\; \qa_{2k+2}\;\qa_{2n-2k-2}&=(2\qa_2\;\qa_{2k}+\fa_{2k}) \qa_{2n-2k-2},\label{eq20}\\
\frac{2n-2k}{2n-2k-1}\; \qa_{2n-2k} \;\qa_{2k}&=(2\qa_2\;\qa_{2n-2k-2}+\fa_{2n-2k-2})\qa_{2k}.\label{eq21}
\end{align}
 Since $2k\leq 2n-2k-2$, we have $\fa_{2n-2k-2}\;\qa_{2k} \geq \fa_{2k} \;\qa_{2n-2k-2}$ by Lemma \ref{Lem4}(iii),
implying that the right-hand side of (\ref{eq20}) is less than or equal to that of (\ref{eq21}).
Consequently, the left-hand side of (\ref{eq21}) is greater than or equal to that of (\ref{eq21}),
which is equivalent to the inequality in (iii).

 For $\alpha=0, 1$, by Lemma \ref{Lem4}(iii), the right-hand sides of (\ref{eq20}) and (\ref{eq21}) are
equal, implying that the inequality in (iii) is an equality.

(iv) Since
\begin{align}
\prod_{j=k}^{m-1} (2j+1)&=\frac{(2m)!}{(2k)! \prod_{j=k}^{m-1} (2j+2)}=
\frac{(2m)!}{(2k)!\; 2^{m-k} m!/k!}\;\;\;\text{and}\;\label{eq20.3}\\
\prod_{j=k}^{m-1} (4m-2j-1)&=\frac{(4m-2k)!}{(2m)! \prod_{j=k}^{m-1} (4m-2j)}
=\frac{(4m-2k)!}{(2m)!\; 2^{m-k} (2m-k)!/m!},\notag
\end{align}
we have by (iii),
\begin{align}
\frac{\qa_{2k}\;\qa_{4m-2k}}{(\qa_{2m})^2}
&=\prod_{j=k}^{m-1} \frac{\qa_{2j}\; \qa_{4m-2j}}{\qa_{2j+2}\;\qa_{4m-2j-2}} \notag\\
&\geq \prod_{j=k}^{m-1} \frac{2j+2}{2j+1}\;\frac{4m-2j-1}{4m-2j}\label{eq20.4}\\
&=\prod_{j=k}^{m-1} \frac{4m-2j-1}{2j+1}\;\prod_{j=k}^{m-1} \frac{2(j+1)}{2(2m-j)}\notag\\
&=\Big[\frac{(4m-2k)!\;(2k)!\; (m!)^2}{((2m)!)^2\;(2m-k)!\;k!}\Big] \Big[\frac{m!/k!}{ (2m-k)!/m!}\Big]
\notag\\
&=\frac{\binom{2k}{k}\binom{4m-2k}{2m-k}}{\binom{2m}{m}^2},\notag
\end{align}
establishing the inequality in (iv). For $\alpha=0, 1$, by (iii), the inequality in (\ref{eq20.4}) is an equality, so that the inequality in (iv) is an equality for $\alpha=0, 1$.

(v) By (\ref{eq20.3}) and
\begin{align*}
\prod_{j=k}^{m-1} (4m-2j+1)&=\frac{(4m-2k+1)!}{(2m+1)! \prod_{j=k}^{m-1} (4m-2j)}
=\frac{(4m-2k+1)!}{(2m+1)!\; 2^{m-k} (2m-k)!/m!},
\end{align*}
we have by (iii),
\begin{align}
\frac{\qa_{2k}\;\qa_{4m-2k+2}}{\qa_{2m}\;\qa_{2m+2}}
&=\prod_{j=k}^{m-1} \frac{\qa_{2j}\; \qa_{4m-2j+2}}{\qa_{2j+2}\;\qa_{4m-2j}}\notag \\
&\geq \prod_{j=k}^{m-1} \frac{2j+2}{2j+1}\;\frac{4m-2j+1}{4m-2j+2}\label{eq20.5}\\
&=\prod_{j=k}^{m-1} \frac{4m-2j+1}{2j+1}\;\prod_{j=k}^{m-1} \frac{2(j+1)}{2(2m-j+1)}\notag\\
&=\Big[\frac{(4m-2k+1)!\;(2k)!\; (m!)^2}{(2m)!\;(2m+1)!\;(2m-k)!\;k!}\Big] \Big[\frac{m!/k!}
{ (2m-k+1)!/(m+1)!}\Big]\notag\\
&=\frac{\binom{2k}{k}\binom{4m-2k+1}{2m-k}}{\binom{2m}{m}\;\binom{2m+1}{m}},\notag
\end{align}
establishing the inequality in (v). For $\alpha=0, 1$, by (iii), the inequality in (\ref{eq20.5}) is an equality, so that the inequality in (v) is an equality for $\alpha=0, 1$.
The proof of Lemma \ref{Lem5} is complete.
\end{proof}

We are now ready to prove Lemma \ref{Lem3}.

\begin{proof}[Proof of Lemma \ref{Lem3}]
Recall the well-known  identity for convolution of  central binomial coefficients (see
\emph{e.g.} \cite[(5.39), p. 187]{GKP}),
\begin{align}\label{eq20.6}
\sum_{k=0}^n \binom{2k}{k}\;\binom{2n-2k}{n-k}=4^n.
\end{align}
For $\alpha=0,1$, 
\begin{align*}
\ga_n&=\sum_{k=0}^n \qa_{2k}\;\qa_{2n-2k}= \sum_{k=0}^n \Big[(\half)^{2k} \binom{2k}{k}\Big] 
\Big[(\half)^{2n-2k} \binom{2n-2k}{n-k}\Big]\\
&=(\half)^{2n} \sum_{k=0}^n \binom{2k}{k}\;\binom{2n-2k}{n-k}=1,
\end{align*}
by (\ref{eq20.6}). So 
\begin{align}\label{eq20.60}
\ga_{n-1}-\ga_n=0\;\;\text{for}\;\; \alpha=0,1\;\;\text{and} \;\;n\geq 1.
\end{align} 
For 
$\alpha \in [0,1]$, $\ga_1=\sum_{k=0}^1 \qa_{2k}\;\qa_{2-2k}=2\;\qa_0\;\qa_2=\alpha^2+(1-\alpha)^2
\leq 1=\ga_0$. To show $\ga_{n-1}-\ga_n \geq 0$ for $n\geq 2$, we consider the two cases
even $n=2m\geq 2$ and odd $n=2m+1\geq 3$ separately.

Case  $n=2m\geq 2$:  By parts (ii) and (iv) of Lemma \ref{Lem5}, we have for $\alpha \in [0,1]$,
\begin{align}
\ga_{2m-1}-\ga_{2m}&=\sum_{k=0}^{2m-1} \qa_{2k}\;\qa_{4m-2k-2}-\sum_{k=0}^{2m} \qa_{2k}\;\qa_{4m-2k}
\notag\\
&=2\sum_{k=0}^{m-1}  \qa_{2k}\;\qa_{4m-2k-2}- 2 \sum_{k=0}^{m-1} \qa_{2k}\;\qa_{4m-2k}-(\qa_{2m})^2
\notag\\
&=2 \sum_{k=0}^{m-1} \qa_{2k}\;(\qa_{4m-2k-2}-\qa_{4m-2k})-(\qa_{2m})^2 \notag\\
&\geq 2 \sum_{k=0}^{m-1} \qa_{2k}\;\Big(\frac{1}{4m-2k-1}\; \qa_{4m-2k}\Big)
-(\qa_{2m})^2 \label{eq20.7}\\
&\geq 2 \sum_{k=0}^{m-1} \Big(\frac{1}{4m-2k-1}\Big) \frac{\binom{2k}{k} \binom{4m-2k}{2m-k}}
{\binom{2m}{m}^2}\;(\qa_{2m})^2 -  (\qa_{2m})^2 \label{eq20.8}\\
&=\Big(2 \sum_{k=0}^{m-1}  \frac{\binom{2k}{k} \binom{4m-2k}{2m-k}}
{(4m-2k-1) \binom{2m}{m}^2} -1 \Big) \;(\qa_{2m})^2.\notag
\end{align}
For $\alpha=0,1$, by parts (ii) and (iv) of Lemma \ref{Lem5} again, the inequalities in
(\ref{eq20.7}) and (\ref{eq20.8}) are equalities. In other words, for
$\alpha \in (0,1)$, 
\begin{align}\label{eq20.11}
\ga_{2m-1}-\ga_{2m}\geq \Big(2 \sum_{k=0}^{m-1}  \frac{\binom{2k}{k} \binom{4m-2k}{2m-k}}
{(4m-2k-1) \binom{2m}{m}^2} -1 \Big) \;(\qa_{2m})^2,
\end{align}
and for $\alpha=0,1$, 
\begin{align}\label{eq20.12}
\ga_{2m-1}-\ga_{2m}=\Big(2 \sum_{k=0}^{m-1}  \frac{\binom{2k}{k} \binom{4m-2k}{2m-k}}
{(4m-2k-1) \binom{2m}{m}^2} -1 \Big) \;(\qa_{2m})^2.
\end{align}
On the other hand, by (\ref{eq20.60}), $\ga_{2m-1}-\ga_{2m}=0$  for $\alpha=0,1$, which together with
(\ref{eq20.12}) implies
\begin{align}\label{eq20.10}
2 \sum_{k=0}^{m-1}  \frac{\binom{2k}{k} \binom{4m-2k}{2m-k}}
{(4m-2k-1) \binom{2m}{m}^2} -1=0.
\end{align}
For $\alpha \in (0,1)$, by (\ref{eq20.11}) and (\ref{eq20.10}),
\begin{align*}
\qa_{2m-1}-\qa_{2m}\geq \Big(2 \sum_{k=0}^{m-1}  \frac{\binom{2k}{k} \binom{4m-2k}{2m-k}}
{(4m-2k-1) \binom{2m}{m}^2} -1 \Big) \;(\qa_{2m})^2=0,
\end{align*}
completing the proof for the case $n=2m$.

Case $n=2m+1\geq 3$:
By parts (ii) and (v) of Lemma \ref{Lem5}, we have for $\alpha \in [0,1]$,
\begin{align}
\ga_{2m}-\ga_{2m+1}&=\sum_{k=0}^{2m} \qa_{2k}\;\qa_{4m-2k}-\sum_{k=0}^{2m+1} \qa_{2k}\;\qa_{4m+2-2k}
\notag\\
&=2\sum_{k=0}^{m-1}  \qa_{2k}\;\qa_{4m-2k}+(\qa_{2m})^2
- 2 \sum_{k=0}^{m} \qa_{2k}\;\qa_{4m-2k+2}
\notag\\
&=2 \sum_{k=0}^{m-1} \qa_{2k}\;(\qa_{4m-2k}-\qa_{4m-2k+2})+\qa_{2m} (\qa_{2m}-\qa_{2m+2})
-\qa_{2m}\;\qa_{2m+2} \notag\\
&\geq 2 \sum_{k=0}^{m-1} \qa_{2k}\;\Big(\frac{1}{4m-2k+1}\; \qa_{4m-2k+2}\Big)
+\qa_{2m} \Big(\frac{1}{2m+1}\; \qa_{2m+2}\Big)- \qa_{2m}\;\qa_{2m+2} \label{eq20.13}\\
&\geq 2 \sum_{k=0}^{m-1} \Big(\frac{1}{4m-2k+1}\Big) \frac{\binom{2k}{k} \binom{4m-2k+1}{2m-k}}
{\binom{2m}{m} \binom{2m+1}{m}}\;\qa_{2m}\;\qa_{2m+2}+\frac{1}{2m+1}\;\qa_{2m}\;\qa_{2m+2}
 -  \qa_{2m}\;\qa_{2m+2} \label{eq20.14}\\
&=\Big(2 \sum_{k=0}^{m-1}  \frac{\binom{2k}{k} \binom{4m-2k+1}{2m-k}}
{(4m-2k+1) \binom{2m}{m} \binom{2m+1}{m}} -\frac{2m}{2m+1} \Big) \;\qa_{2m}\;\qa_{2m+2}.\notag
\end{align}
For $\alpha=0,1$, by parts (ii) and (v) of Lemma \ref{Lem5} again, the inequalities in
(\ref{eq20.13}) and (\ref{eq20.14}) are equalities, which together with (\ref{eq20.60}) imply that 
 for
$\alpha=0,1$, 
\begin{align}
0&=\ga_{2m}-\ga_{2m+1}\notag\\
&=\Big(2 \sum_{k=0}^{m-1}  \frac{\binom{2k}{k} \binom{4m-2k+1}{2m-k}}
{(4m-2k+1) \binom{2m}{m} \binom{2m+1}{m}} -\frac{2m}{2m+1} \Big) \;\qa_{2m}\;\qa_{2m+2},
\label{eq20.15}
\end{align}
and for $\alpha \in (0,1)$,
\begin{align*}
\ga_{2m}-\ga_{2m+1}\geq \Big(2 \sum_{k=0}^{m-1}  \frac{\binom{2k}{k} \binom{4m-2k+1}{2m-k}}
{(4m-2k+1) \binom{2m}{m} \binom{2m+1}{m}} -\frac{2m}{2m+1} \Big) \;\qa_{2m}\;\qa_{2m+2}=0,
\end{align*}
completing the proof for the case $n=2m+1$. The proof of Lemma \ref{Lem3} is complete.
\end{proof}

\begin{remark}
The equality in (\ref{eq20.15}) yields the combinatorial identity
\begin{align}\label{eq20.16}
2 \sum_{k=0}^{m-1}  \frac{\binom{2k}{k} \binom{4m-2k+1}{2m-k}}
{(4m-2k+1) \binom{2m}{m} \binom{2m+1}{m}} -\frac{2m}{2m+1}=0\;.
\end{align}
This and  the  identity in 
(\ref{eq20.10}) will be needed in the next section.
\end{remark}

\begin{remark}\label{section3}
By Lemma \ref{Lem2}, we have $q_{2n}^{\alpha'}>q_{2n}^{\alpha''}$ for $n\geq 1$ and
$0\leq \alpha'<\alpha''\leq \half$. Equivalently, we have $q_{2n}^{\mathbf{h}'}>q_{2n}^{\mathbf{h}''}$ if 
$\mathbf{h}'=(\alpha', 1-\alpha') \succ \mathbf{h}''=(\alpha'',1-\alpha'')$.
\end{remark}

\section{The general case $d\ge 3$}

For two $d$-dimensional probability vectors $\mathbf{h}'$ and
$\mathbf{h}''$ with $d\geq 3$, suppose $\mathbf{h}' \succ \mathbf{h}''$.
Then there exists a sequence of probability vectors $(\mathbf{h}_\ell, \ell=0,\dots, k)$
such that $\mathbf{h}_0=\mathbf{h}', \mathbf{h}_k=\mathbf{h}''$, and
for $\ell=1,\dots, k$, 
$\mathbf{h}_{\ell-1} \succ \mathbf{h}_{\ell}$ and they differ only in two entries
(see \emph{e.g.} \cite[Lemma B.1, Chapter 2]{MOA}).
Hence, to show that $\Tha$ is stochastically smaller than $\Thb$, 
it suffices to assume that
$\mathbf{h}'$ and $\mathbf{h}''$ differ only in two entries. It is convenient to
introduce the representation $(\alpha, \xi)$ of a $d$-dimensional probability vector 
$\mathbf{h}=(h_1,\dots, h_d)$, where for some $1\leq i<j\leq d$,
$\xi=(i,j, h_k, k \in \{1,\dots, d\}\setminus \{i,j\})$ and
$\alpha=h_i/(h_i+h_j)$. (In this representation,
it is implicitly assumed that $h_i+h_j>0$.)
As an example, for $d=4$, $(\alpha,\xi)$ with $\alpha=0.3$ and
$\xi=(2,4, 0.2, 0.4)$ represents the vector $\mathbf{h}=(0.2, 0.12, 0.4, 0.28)$.
For two probability vectors $\mathbf{h}'$ and $\mathbf{h}''$
with respective representations $(\alpha', \xi)$ and $(\alpha'', \xi)$ (sharing the same $\xi$), 
$\mathbf{h}' \succ \mathbf{h}''$ if and only if 
$|\alpha'-\half| > |\alpha''-\half|$. As in the preceding section, we write
$\Th=\Tax, \Sh_n=\Sax_n, \Xh_n=\Xax_n, \ph_n=\pax_n, \qh_n=\qax_n, \Ph(z)=\Pax(z),
\Qh(z)=\Qax(z), \gh_n=\gax_n, \fh_n=\fax_n$. We use these two types of expressions interchangeably, depending
on whether a pair of $(i,j)$ with $1\leq i<j \leq d$ requires special attention..
We also write $\mathbf{h}\equiv (\alpha, \xi)$ to mean that
$(\alpha, \xi)$ is the representation of $\mathbf{h}$.
 (For the 2-dimensional case, necessarily
$\xi=(1,2)$, so that $\xi$ need not be included  in the representation, and 
$\alpha$ alone represents the 2-dimensional probability 
vector $(\alpha, 1-\alpha)$, i.e. $\mathbf{h}=(\alpha, 1-\alpha)\equiv \alpha$.
 In this way, it is consistent with the notation in Section 3
 where we write $\Sh_n=\Sa_n, \ph_n=\pa_n, \qh_n=\qa_n, \gh_n=\ga_n,
 \Ph(z)=\Pa(z), \Qh(z)=\Qa(z)$.)

As discussed in the preceding paragraph, to prove Theorem \ref{Thm1} for $d\geq 3$, it suffices
to show that 
\begin{align}\label{eq4.1}
T^{\alpha',\;\xi}\;\;\text{is stochastically smaller than}\;\;T^{\alpha'',\; \xi},
\end{align}
for any $\xi$ and $0\leq \alpha'<\alpha''\leq \half$.
Our proof will closely follow  the arguments in the preceding
section, while adopting a conditional approach to deal with the presence of $\xi$.
We will show that for any given $\xi$,
\begin{align}\label{eq4.2}
\sum_{k=1}^n \Da \pax_{2k}\leq 0 \;\;\text{for}\;\; \alpha \in [0,\half]\;\,\text{and}\;\; n\geq 1,
\end{align}
which implies (\ref{eq4.1}). (Note that $\xi$ is treated as fixed in $\Da \pax_{2k}$.)

We need the following lemmas, which are extensions of Lemmas \ref{Lem2} and \ref{Lem3} to
$d \geq 3$.

\begin{lemma}\label{Lem6}
For any $\xi$ and  $\alpha< \half$, we have $\Da \qax_{2n} < 0$ for $n\geq 1$.
\end{lemma}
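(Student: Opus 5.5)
We plan a conditional argument: condition on the number of steps the walk takes in dimensions $i$ and $j$ together, and reduce to the two-dimensional result, Lemma \ref{Lem2}. Write $s=h_i+h_j>0$, so that $h_i=\alpha s$ and $h_j=(1-\alpha)s$. The other entries $h_k$, $k\notin\{i,j\}$, are fixed by $\xi$ and do not depend on $\alpha$.

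At each step the walk chooses the pair $\{i,j\}$ with probability $s$, independently of $\alpha$. Given that it chooses this pair, it moves in dimension $i$ with conditional probability $\alpha$ and in dimension $j$ with conditional probability $1-\alpha$. We condition on the set of times at which the pair $\{i,j\}$ is chosen, say $2\ell$ steps among the first $2n$, and on the moves in the other dimensions. The projection onto coordinates $(i,j)$ then evolves as the two-dimensional walk $S^\alpha$ run for $2\ell$ steps. The remaining coordinates form an independent walk whose law does not depend on $\alpha$. Both parts must return to zero. The pair count must be even for a return, and for an odd count the contribution is zero. This gives
\begin{align*}
\qax_{2n}=\sum_{\ell=0}^{n}\binom{2n}{2\ell}s^{2\ell}(1-s)^{2n-2\ell}\,r_{2n-2\ell}\;\qa_{2\ell},
\end{align*}
where $r_{2m}$ is the probability that the walk on the remaining $d-2$ coordinates, with step law $(h_k/(1-s))_{k\ne i,j}$, is at the origin after $2m$ steps. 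We set $r_0=1$, and when $s=1$ we interpret the formula as $\qax_{2n}=\qa_{2n}$.

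Differentiating in $\alpha$ with $\xi$ fixed, only the factor $\qa_{2\ell}$ depends on $\alpha$. Hence
\begin{align*}
\Da\qax_{2n}=\sum_{\ell=1}^{n}\binom{2n}{2\ell}s^{2\ell}(1-s)^{2n-2\ell}\,r_{2n-2\ell}\,\Da\qa_{2\ell}.
\end{align*}
The $\ell=0$ term drops out because $\qa_0=1$. By Lemma \ref{Lem2}, $\Da\qa_{2\ell}<0$ for every $\ell\ge1$ and $\alpha<\half$. All weights are nonnegative, so $\Da\qax_{2n}\le 0$.

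For the strict inequality we need one term with a positive weight, and the $\ell=n$ term works: its weight is $s^{2n}r_0=s^{2n}>0$. Therefore $\Da\qax_{2n}\le s^{2n}\Da\qa_{2n}<0$.

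The main obstacle is justifying the conditional decomposition rigorously. We must check that the pair-selection process and the other-coordinate moves are independent of $\alpha$, and we must handle the degenerate cases $s=1$ and $r$ defined for $d-2$ coordinates, possibly with zero entries. This reduces to a multinomial rearrangement of (\ref{eq2.00}): group the indices $\ell_i+\ell_j=\ell$ and use the identity $\frac{(2n)!}{\prod(\ell_k!)^2}=\binom{2n}{2\ell}\frac{(2\ell)!}{(\ell_i!\ell_j!)^2}\frac{(2n-2\ell)!}{\prod_{k\ne i,j}(\ell_k!)^2}$. Doing the regrouping directly on (\ref{eq2.00}) avoids any probabilistic subtleties.
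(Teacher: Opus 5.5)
Your proposal is correct and is essentially the paper's proof. Like the paper, you condition on how many of the $2n$ steps fall in the two dimensions $i,j$, obtain the mixture formula (\ref{eq4.4}) with nonnegative, $\alpha$-free weights, and apply Lemma \ref{Lem2} termwise. Your strictness argument via the $\ell=n$ weight $s^{2n}>0$, your handling of $s=1$, and the multinomial regrouping of (\ref{eq2.00}) make rigorous details the paper leaves implicit, since the paper simply assumes $0<\gamma<1$ without loss of generality.
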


\begin{lemma}\label{Lem7}
For any $\mathbf{h}$, we have $\gh_n \geq \gh_{n+1}$ for $n \geq 0$.
\end{lemma}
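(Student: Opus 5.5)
I would mirror the proof of Lemma \ref{Lem3}, replacing its two-dimensional ingredients by $d$-dimensional analogues. The structure of that proof uses only three facts about the sequence $(\qh_{2n})$:
\begin{itemize}
\item the one-step inequality of Lemma \ref{Lem5}(ii), $\qh_{2n}-\qh_{2n+2}\geq \frac{1}{2n+1}\qh_{2n+2}$;
\item the ratio inequality of Lemma \ref{Lem5}(iii);
\item the fact that equality holds in both for the one-dimensional walk (where $\gh_n\equiv 1$ by (\ref{eq20.6})).
\end{itemize}
Parts (iv) and (v) of Lemma \ref{Lem5} and the final computations (\ref{eq20.7})--(\ref{eq20.14}) then use only (ii) and (iii), together with the combinatorial identities (\ref{eq20.10}) and (\ref{eq20.16}), which are purely numerical. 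So it suffices to establish (ii) and (iii) for general $\mathbf{h}$ and then copy the argument verbatim, with $\qa$ replaced by $\qh$. The case $\gh_0\geq\gh_1$ is immediate, since $\gh_1=2\qh_2=\sum_i h_i^2\leq 1$.

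For (ii), I use Lemma \ref{Lem1}:
\[
\frac{2n+2}{2n+1}\qh_{2n+2}=2\qh_2\qh_{2n}+\fh_{2n}.
\]
Since $2\qh_2+2\sum_{i<j}h_ih_j=\sum_i h_i^2+2\sum_{i<j}h_ih_j=1$, it suffices to show $\MP(\Sh_{2n}=\bfe_d^i+\bfe_d^j)\leq \qh_{2n}$ for each pair $i<j$. This is the $d$-dimensional form of Lemma \ref{Lem5}(i). I would prove it by Fourier analysis, without recomputing the sum:
\[
\MP(\Sh_{2n}=x)=(2\pi)^{-d}\int \phi(\theta)^{2n}e^{-i\langle x,\theta\rangle}\,d\theta,
\]
where $\phi(\theta)=\sum_k h_k\cos\theta_k$ is real. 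Hence $\MP(\Sh_{2n}=x)\leq(2\pi)^{-d}\int\phi^{2n}=\qh_{2n}$.

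For (iii), I need the analogue of Lemma \ref{Lem4}(iii): $\fh_{2n}/\qh_{2n}$ is nondecreasing in $n$. Given this, the argument via (\ref{eq20}) and (\ref{eq21}) goes through unchanged. It is enough to show that $r_n=\MP(\Sh_{2n}=\bfe_d^i+\bfe_d^j)/\qh_{2n}$ is nondecreasing for each pair. I would use a conditional approach adapted to the $(\alpha,\xi)$ representation. Condition on the number $2L$ of steps taken in dimensions $i$ and $j$ together, and on the parities of the step counts in the other dimensions. Both events require the other coordinates to return to $0$. Writing $w_n(L)$ for the joint probability that $L$ equals a given value and that the other coordinates are at $0$, we get
\[
\qh_{2n}=\sum_L w_n(L)\,q^\alpha_{2L}, \qquad \MP(\Sh_{2n}=\bfe_d^i+\bfe_d^j)=\sum_L w_n(L)\,\MP(S^\alpha_{2L}=(1,1)),
\]
with the same weights. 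By Lemma \ref{Lem4}(ii), the ratio $\MP(S^\alpha_{2L}=(1,1))/q^\alpha_{2L}$ is nondecreasing in $L$. So $r_n$ is a weighted average of a nondecreasing function of $L$, under the tilted law with weights $w_n(L)q^\alpha_{2L}$. It then remains to show that these tilted laws are stochastically increasing in $n$, and I would try to prove a likelihood-ratio ordering for them.

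\textbf{Main obstacle.} I expect this last step to be the hardest. The weights $w_n(L)q^\alpha_{2L}$ involve return probabilities of the complementary walk, so the monotone-likelihood-ratio property is not obvious. If that approach fails, I would instead seek a direct generalization of the Cauchy--Schwarz argument (\ref{eq17})--(\ref{eq18}). That would use a recursion in $n$ for the pair $(\qh_{2n},\MP(\Sh_{2n}=\bfe_d^i+\bfe_d^j))$ or its Fourier representation, together with Chebyshev's association inequality applied to $\phi^{2n}$ and $\cos\theta_i\cos\theta_j$.
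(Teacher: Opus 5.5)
Most of your reduction is sound. The Fourier bound $\MP(\Sh_{2n}=x)\le \qh_{2n}$ is correct, and it is a slicker proof of Lemma~\ref{Lem9}(i) than the paper's induction on $d$. Combined with Lemma~\ref{Lem1}, it yields Lemma~\ref{Lem9}(ii) exactly as in the paper. Once $\fh_{2n}/\qh_{2n}$ is known to be nondecreasing, the rest does carry over verbatim: the passage through (\ref{eq20})--(\ref{eq21}), parts (iv)--(v), and the final computation with (\ref{eq20.10}) and (\ref{eq20.16}).

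\textbf{The gap.} It is the step you flag yourself. You do not prove that $r_n=\MP(\Sh_{2n}=\bfe_d^i+\bfe_d^j)/\qh_{2n}$ is nondecreasing for $d\ge 3$, which is the content of Lemma~\ref{Lem8}. Without it, nothing after (ii) goes through, and your fallback ideas are not yet arguments.

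To see what is missing, let $\gamma=\sum_{k\ne i,j}h_k$, $g_k=h_k/\gamma$, and let $Q_{2m}$ be the return probability of the complementary walk. Your weights are $w_n(L)=\binom{2n}{2L}(1-\gamma)^{2L}\gamma^{2n-2L}Q_{2n-2L}$. The factor $\qa_{2L}$ cancels from the likelihood ratio of the tilted laws, but $Q$ does not. Writing $Q_{2m}=4^{-m}(2m)!\,R_m$, with $R_m$ the sum of $\prod_{k\ne i,j} g_k^{2\ell_k}/(\ell_k!)^2$ over $(\ell_k)_{k\ne i,j}$ with $\sum_k\ell_k=m$, one gets
\[
\frac{w_{n+1}(L)}{w_n(L)}=\frac{(2n+1)(2n+2)\gamma^2}{4}\cdot\frac{R_{n-L+1}}{R_{n-L}} .
\]
So the likelihood-ratio ordering you want (for all $n$) is exactly log-concavity of $(R_m)$. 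None of the results you invoke supplies this.

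\textbf{Two ways to close it.}

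The paper avoids the issue by inducting on $d$ and splitting off a single coordinate rather than the pair $(i,j)$. Condition on $2(n-k)$ steps in dimension $d+1$.
\begin{itemize}
\item The inner $d$-dimensional walk carries coordinates $i,j$, and the induction hypothesis gives monotonicity of $\MP(\Sha_{2k}=\bfe_d^1+\bfe_d^2)/\qha_{2k}$ in $k$.
\item The tilting factor $\qha_{2k}$ cancels from the weight ratio.
\item The outer walk is one-dimensional with explicit return probability $4^{-m}\binom{2m}{m}$. Hence $w_{n,k}/w_{n+1,k}\propto (n-k+1)^2$, visibly decreasing in $k$. In your notation this is the case $R_m=1/(m!)^2$.
\end{itemize}

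Alternatively, your own route can be finished directly. $(R_m)$ is the convolution over $k\ne i,j$ of the sequences $(g_k^{2\ell}/(\ell!)^2)_{\ell\ge 0}$. Each of these is log-concave with no internal zeros, and convolution preserves this property. Hence $R_{m+1}/R_m$ is nonincreasing, the tilted laws increase in likelihood ratio, and Lemma~\ref{Lem4}(ii) finishes the argument with no induction on $d$.

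One of these two arguments has to be supplied; as written, the central step of the lemma is missing.
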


Assuming Lemmas \ref{Lem6} and \ref{Lem7} hold, (\ref{eq4.2}) 
follows by the same arguments right below
the statement of Lemma \ref{Lem3}. More precisely, similar to (\ref{eq14}), we have 
\begin{align*}
\sum_{n=1}^\infty \Da \pax_{2n}\; z^{n}&=\frac{\sum_{n=1}^\infty \Da \qax_{2n}\; z^{n}}{(\Qax(z))^2}
=\frac{\sum_{n=1}^\infty \Da \qax_{2n}\; z^{n}}{\sum_{n=0}^\infty \gax_{n}\;z^{n}},
\end{align*}
implying that
\begin{align*}
\sum_{n=1}^\infty (\sum_{k=1}^n \Da \pax_{2k})\; z^{n}&=(\sum_{n=1}^\infty \Da \pax_{2n}\; z^{n})(\sum_{n=0}^\infty z^{n})\\
&=(\sum_{n=1}^\infty \Da \pax_{2n} \; z^{n})/(1-z)\\
&=\frac{\sum_{n=1}^\infty \Da \qax_{2n}\; z^{n}}{(1-z) \sum_{n=0}^\infty \gax_{n}\;z^{n}}\\
&=\frac{\sum_{n=1}^\infty \Da \qa_{2n}\; z^{n}}{1-\sum_{n=1}^\infty (\gax_{n-1}-\gax_{n}) z^{n}}\\
&=\big[\sum_{n=1}^\infty \Da \qax_{2n}\; z^{n}\big] \;\big[\sum_{k=0}^\infty 
\big(Z^{\alpha,\xi}(z)\big)^k\big],
\end{align*}
where $Z^{\alpha,\xi} (z)=\sum_{n=1}^\infty (\gax_{n-1}-\gax_{n}) z^{n}$.
The coefficients of the power series 
$\sum_{n=1}^\infty \Da \qax_{2n}\; z^{n}$ are non-positive for $\alpha \in [0, \half]$ by 
Lemma \ref{Lem6}, 
while those of
the power series $Z^{\alpha,\xi} (z)$ are non-negative by Lemma \ref{Lem7}. It follows that
the coefficients of the power series $\sum_{n=1}^\infty (\sum_{k=1}^n \Da \pax_{2k})\; z^{n}$
for $\alpha \in [0, \half]$ are non-positive,
proving (\ref{eq4.2}). It remains to prove Lemmas \ref{Lem6} and \ref{Lem7}.

\begin{proof}[Proof of Lemma \ref{Lem6}]
For $\mathbf{h}\equiv (\alpha, \xi)$, to show $\Da \qax_{2n}< 0$ for 
$\alpha <\half$ and $n \geq 1$,
we assume without loss of generality that $\xi=(1,2, h_3, \dots, h_d)$ and $0<h_3+\cdots+h_d<1$.
Let $\gamma=h_3+\cdots+h_d$ and $\mathbf{h}'=\gamma^{-1}(h_3,\dots, h_d)$,
a ($d-2$)-dimensional probability vector. Let $N_n^i=|\{1\leq k\leq n:
\Xax_k \in \{\bfe_{d}^{i}, -\bfe_d^{i}\}\}|$, the number of times the random walk
$(\Sax_0, \Sax_1, \dots, \Sax_n)$ moves in the $i$-th dimension. Then $N_n^1+N_n^2$ is binomial
with parameters $n$ and $1-\gamma$. Given $N_{n}^1+N_{n}^2=\ell$, 
the set of the first two components of $\Sax_n$ is conditionally independent of
the set of the other components of $\Sax_n$. Consequently,
\begin{align}\label{eq4.3}
\MP(\Sax_{n}=\zero_d\;\Big|\; N_{n}^1+N_{n}^2=\ell)=\MP(\Sa_\ell=(0,0))\;
\MP(\Sha_{n-\ell}=\zero_{d-2}).
\end{align}
Note that $\MP(\Sa_\ell=(0,0))=0$ for odd $\ell$. By (\ref{eq4.3}),
\begin{align}
\qax_{2n}&=\MP(\Sax_{2n}=\zero_d)\notag\\
&=\sum_{m=0}^n \binom{2n}{2m} (1-\gamma)^{2m} \gamma^{2n-2m}\;
\MP(\Sax_{2n}=\zero_d\;\Big|\; N_{2n}^1+N_{2n}^2=2m)\notag\\
&=\sum_{m=0}^n \binom{2n}{2m} (1-\gamma)^{2m} \gamma^{2n-2m}\;
\MP(\Sa_{2m}=(0,0))\;\MP(\Sha_{2n-2m}=\zero_{d-2})\notag\\
&=\sum_{m=0}^n \binom{2n}{2m} (1-\gamma)^{2m} \gamma^{2n-2m}\;
\qa_{2m}\;\MP(\Sha_{2n-2m}=\zero_{d-2}).\label{eq4.4}
\end{align}
Since by Lemma \ref{Lem2}, $\Da \qa_{2m}< 0$ for $\alpha <\half$ and $m\geq 1$, it follows from (\ref{eq4.4}) that
$\Da \qax_{2n}< 0$ for  $\alpha <\half$ and $n\geq 1$.
\end{proof}

To prove Lemma \ref{Lem7}, we  need Lemmas \ref{Lem8} and \ref{Lem9} below, which are extensions of
Lemmas \ref{Lem4} and \ref{Lem5} to $d \geq 3$.

\begin{lemma}\label{Lem8}
(i) For  $1\leq i<j\leq d$ and 
$n>m\geq 0$, 
\begin{align*}
\frac{\MP(\Sh_{2n}=\bfe_d^{i}+\bfe_d^j)}{\qh_{2n}}\geq 
\frac{\MP(\Sh_{2m}=\bfe_d^{i}+\bfe_d^j)}{\qh_{2m}}\;.
\end{align*}

(ii)
For  $n>m\geq 0$, $\fh_{2n}/\qh_{2n} \geq \fh_{2m}/\qh_{2m}$.
\end{lemma}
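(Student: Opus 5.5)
The plan is to reduce part (i) to the two-dimensional Lemma \ref{Lem4}(ii) by the same conditioning device used in the proof of Lemma \ref{Lem6}. Part (ii) then follows at once: by (\ref{eq3}), $\fh_{2n}/\qh_{2n}=2\sum_{i<j}h_ih_j\,\MP(\Sh_{2n}=\bfe_d^i+\bfe_d^j)/\qh_{2n}$ is a nonnegative combination of sequences that are nondecreasing by (i).

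\textbf{Degenerate cases.} Fix $i<j$ and relabel so that $(i,j)=(1,2)$.
\begin{itemize}
\item If $h_1h_2=0$, the walk never visits $\bfe_d^1+\bfe_d^2$, so both sides of (i) vanish.
\item If $\gamma:=h_3+\cdots+h_d=0$, we are in the $2$-dimensional case, which is Lemma \ref{Lem4}(ii).
\end{itemize}
So assume $0<\gamma<1$ and $\alpha=h_1/(h_1+h_2)\in(0,1)$. Let $\mathbf{h}'=\gamma^{-1}(h_3,\dots,h_d)$ and $r_{2k}=\MP(\Sha_{2k}=\zero_{d-2})$.

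\textbf{Mixture representation.} Exactly as in (\ref{eq4.3})--(\ref{eq4.4}), condition on $N^1_{2n}+N^2_{2n}=2m$. Define
\[
w_{n,m}=\binom{2n}{2m}(1-\gamma)^{2m}\gamma^{2n-2m}\,\qa_{2m}\,r_{2n-2m}, \qquad \rho_m=\MP(\Sa_{2m}=(1,1))/\qa_{2m}.
\]
Then $\qh_{2n}=\sum_{m=0}^n w_{n,m}$ and $\MP(\Sh_{2n}=\bfe_d^1+\bfe_d^2)=\sum_{m=0}^n w_{n,m}\rho_m$. Hence the ratio in (i) equals $E[\rho_{M_n}]$, where $M_n$ has distribution $\pi_n(m)=w_{n,m}/\sum_{m'}w_{n,m'}$ on $\{0,\dots,n\}$. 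By Lemma \ref{Lem4}(ii), $\rho_m$ is nondecreasing in $m$. It therefore suffices to show that $M_n$ is stochastically nondecreasing in $n$, and by induction it is enough to compare $n$ with $n+1$.

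\textbf{Likelihood-ratio ordering.} I will prove the stronger statement that $w_{n+1,m}/w_{n,m}$ is nondecreasing in $m\in\{0,\dots,n\}$. At $m=n+1$ we have $w_{n,n+1}=0$, which is only favourable. Writing $k=n-m$, the ratio is
\[
\frac{w_{n+1,m}}{w_{n,m}}=\gamma^2\,(2n+2)(2n+1)\cdot\frac{r_{2k+2}/(2k+2)!}{r_{2k}/(2k)!},
\]
so I need $s_{k+1}/s_k$ nonincreasing in $k$, where $s_k=r_{2k}/(2k)!$. That is, $(s_k)$ must be log-concave.

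This is the main obstacle. The naive sequence $r_{2k}$ is log-convex, since $r_{2k}\sim ck^{-(d-2)/2}$, so the factorial normalization is essential. By (\ref{eq2.00}) applied to the $(d-2)$-dimensional walk,
\[
s_k=\sum_{\Gamma_k}\prod_{i}\frac{(h'_i/2)^{2\ell_i}}{(\ell_i!)^2}.
\]
This is the $k$-fold convolution of the sequences $b^{(i)}_\ell=c_i^\ell/(\ell!)^2$ with $c_i=(h'_i/2)^2$. Each $b^{(i)}$ is log-concave with no internal zeros, since $b^{(i)}_{\ell+1}/b^{(i)}_\ell=c_i/(\ell+1)^2$ is decreasing. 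By the standard closure of such log-concave sequences under convolution (Hoggar/Ibragimov), $(s_k)$ is log-concave. Zero entries $h'_i=0$ can simply be dropped.

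\textbf{Conclusion.} A monotone likelihood ratio gives $M_n\le_{\rm st}M_{n+1}$, hence $E[\rho_{M_n}]\le E[\rho_{M_{n+1}}]$. This proves (i), and (ii) follows as noted at the start.
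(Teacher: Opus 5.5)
Your proof is correct, and it takes a genuinely different route from the paper's.

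\textbf{The paper's route.} The paper argues by induction on $d$. It conditions on the number $N_{2n}^{d+1}$ of moves in one \emph{extra} coordinate, not one of the two distinguished ones. This writes the $(d+1)$-dimensional ratio as a weighted average of the $d$-dimensional ratios $\MP(\Sha_{2k}=\bfe_d^1+\bfe_d^2)/\qha_{2k}$, with weights proportional to $b_{n,n-k}\,\qha_{2k}$. The factor $\qha_{2k}$ cancels in $w_{n,k}/w_{n+1,k}$. What remains, up to a $k$-free normalization, is the explicit quantity $2(n-k+1)^2/((n+1)(2n+1)h_{d+1}^2)$, which is visibly decreasing in $k$. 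The base case $d=2$ is Lemma \ref{Lem4}(ii).

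\textbf{Your route.} You condition on the number of moves in the two distinguished coordinates, as in the proof of Lemma \ref{Lem6}. This reduces everything to Lemma \ref{Lem4}(ii) in one step, with no induction. The price is that the non-cancelling factor is now the return probability $r_{2k}$ of the complementary $(d-2)$-dimensional walk, which has no closed form once $d\ge 4$. You cover this with the log-concavity of $s_k=r_{2k}/(2k)!$, obtained from closure of log-concave sequences under convolution. For $d=3$ your $s_k=4^{-k}/(k!)^2$, and the two computations coincide.

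\textbf{What each buys.} Because the paper only ever splits off a single coordinate, its non-cancelling factor is always the explicit one-dimensional term. So it needs nothing beyond the monotone-likelihood-ratio comparison of weighted averages, and is fully self-contained. Yours imports the Hoggar/Ibragimov-type convolution theorem, but it is shorter and structurally parallel to Lemma \ref{Lem6}. It also gives, as a by-product, the independently interesting fact that $\qh_{2k}/(2k)!$ is log-concave in $k$ for every probability vector $\mathbf{h}$. Your remark that $r_{2k}$ itself is log-convex is only heuristic as stated, but it is not used anywhere in the argument.
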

\begin{proof}[Proof of Lemma \ref{Lem8}]
(i) It suffices to consider $(i,j)=(1,2)$ and show that 
\begin{align}\label{eq4.5}
\frac{\MP(\Sh_{2(n+1)}=\bfe_d^{1}+\bfe_d^2)}{\qh_{2(n+1)}}\geq 
\frac{\MP(\Sh_{2n}=\bfe_d^{1}+\bfe_d^2)}{\qh_{2n}},
\end{align}
for $n\geq 0$ 
and all $d$-dimensional probability vector
$\mathbf{h}$.
We prove (\ref{eq4.5}) by induction on $d \geq 2$. The case $d=2$ follows from Lemma \ref{Lem4}(ii).
Suppose (\ref{eq4.5}) holds for some $d\geq 2$. We need to show
\begin{align}\label{eq4.6}
\frac{\MP(\Sh_{2(n+1)}=\bfe_{d+1}^{1}+\bfe_{d+1}^2)}{\qh_{2(n+1)}}\geq 
\frac{\MP(\Sh_{2n}=\bfe_{d+1}^{1}+\bfe_{d+1}^2)}{\qh_{2n}},
\end{align}
for $n\geq 0$ and all
$(d+1)$-dimensional probability vector $\mathbf{h}=(h_1,\dots, h_{d+1})$.
Let $\mathbf{h}'=(h_1,\dots,h_d)/(1-h_{d+1})$, a $d$-dimensional probability vector.
Let $N_{2n}^{d+1}$ denote the number of times the random walk $(\Sh_0, \Sh_1,\dots, \Sh_{2n})$
moves in the $(d+1)$-th dimension, which is binomial with parameters $2n$ and $h_{d+1}$. 
Then given $N_{2n}^{d+1}=\ell$, the set of the first $d$ components of $\Sh_{2n}$ is
conditionally independent of the $(d+1)$-th component of $\Sh_{2n}$. 
For the the $(d+1)$-th component of $\Sh_{2n}$ to be 0, it is necessary for $N_{2n}^{d+1}$ to be
an even number. Moreover, given $N_{2n}^{d+1}=2k$, the $(d+1)$-th component of $\Sh_{2n}$ 
equals 0 with (conditional) probability $2^{-2k} \binom{2k}{k}$. It follows that
\begin{align}
\MP(\Sh_{2n}=\bfe_{d+1}^{1}+\bfe_{d+1}^2)&=
\sum_{k=0}^n \binom{2n}{2k} h_{d+1}^{2k} (1-h_{d+1})^{2n-2k} 2^{-2k} \binom{2k}{k} 
\MP(\Sha_{2n-2k}=\bfe_d^{1}+\bfe_d^2)\notag\\
&=\sum_{k=0}^n \frac{(2n)!}{2^{2k} (k!)^2 (2n-2k)!} h_{d+1}^{2k} (1-h_{d+1})^{2n-2k}
\MP(\Sha_{2n-2k}=\bfe_d^{1}+\bfe_d^2)\notag\\
&=\sum_{k=0}^n b_{n,k} \;\MP(\Sha_{2n-2k}=\bfe_d^{1}+\bfe_d^2),\label{eq4.7}
\end{align}
where for $k=0,\dots, n$,
\begin{align}\label{eq4.70}
b_{n,k}=\frac{(2n)!}{2^{2k} (k!)^2 (2n-2k)!} h_{d+1}^{2k} (1-h_{d+1})^{2n-2k}.
\end{align}
 Similarly,
\begin{align}
\MP(\Sh_{2n}=\zero_{d+1})&=
\sum_{k=0}^n \binom{2n}{2k} h_{d+1}^{2k} (1-h_{d+1})^{2n-2k} 2^{-2k} \binom{2k}{k} 
\MP(\Sha_{2n-2k}=\zero_d)\notag\\
&=\sum_{k=0}^n b_{n,k} \;\MP(\Sha_{2n-2k}=\zero_d)=\sum_{k=0}^n b_{n,n-k} \;\qha_{2k}.\label{eq4.701}
\end{align}
We re-write (\ref{eq4.7}) as
\begin{align}
\MP(\Sh_{2n}=\bfe_{d+1}^{1}+\bfe_{d+1}^2)
&=\sum_{k=0}^n b_{n,k} \;\qha_{2n-2k} \;\frac{\MP(\Sha_{2n-2k}=\bfe_d^{1}+\bfe_d^2)}{\qha_{2n-2k}}
\notag\\
&=\sum_{k=0}^n b_{n,n-k} \;\qha_{2k} \;\frac{\MP(\Sha_{2k}=\bfe_d^{1}+\bfe_d^2)}{\qha_{2k}}.\label{eq4.71}
\end{align}
By (\ref{eq4.701})--(\ref{eq4.71}),
\begin{align}
\frac{\MP(\Sh_{2n}=\bfe_{d+1}^{1}+\bfe_{d+1}^2)}{\qh_{2n}}&=
\frac{\MP(\Sh_{2n}=\bfe_{d+1}^{1}+\bfe_{d+1}^2)}{\MP(\Sh_{2n}=\zero_{d+1})}\notag\\
&= (\sum_{k=0}^n b_{n,n-k} \;\qha_{2k})^{-1}
 \sum_{k=0}^n b_{n,n-k} \;\qha_{2k} \;\frac{\MP(\Sha_{2k}=\bfe_d^{1}+\bfe_d^2)}{\qha_{2k}},
\end{align}
which is a weighted average of $\frac{\MP(\Sha_{2k}=\bfe_d^{1}+\bfe_d^2)}{\qha_{2k}}, k=0,\dots, n$
with respective weights $w_{n,k}=b_{n,n-k}\;\qha_{2k}/\sum_{\ell=0}^n b_{n,n-\ell} \;\qha_{2\ell}$.
Similarly, $\frac{\MP(\Sh_{2(n+1)}=\bfe_{d+1}^{1}+\bfe_{d+1}^2)}{\qh_{2(n+1)}}$
is a weighted average of $\frac{\MP(\Sha_{2k}=\bfe_d^{1}+\bfe_d^2)}{\qha_{2k}}, k=0,\dots, n+1$
with respective weights 
$w_{n+1,k}=b_{n+1,n+1-k}\;\qha_{2k}/\sum_{\ell=0}^{n+1} b_{n+1,n+1-\ell} \;\qha_{2\ell}$.
By the induction hypothesis,  
\begin{align}\label{eq4.69}
\frac{\MP(\Sha_{2k}=\bfe_d^{1}+\bfe_d^2)}{\qha_{2k}}\;\;\text{is  increasing
 in}\;\; k.
 \end{align}
The ratio of the weights equals
\begin{align*}
\frac{w_{n,k}}{w_{n+1,k}}&=\frac{b_{n,n-k}\;\qha_{2k}/\sum_{\ell=0}^n b_{n,n-\ell}\; \qha_{2 \ell}}
{b_{n+1,n+1-k}\;\qha_{2k}/\sum_{\ell=0}^{n+1} b_{n+1,n+1-\ell}\; \qha_{2 \ell}}\\
&=\frac{2 (n-k+1)^2}{(n+1)(2n+1) h_{d+1}^2} \;\frac{\sum_{\ell=0}^{n+1} b_{n+1,n+1-\ell}\;\qha_{2 \ell}}
{\sum_{\ell=0}^n b_{n,n-\ell}\; \qha_{2\ell} },
\end{align*}
which is decreasing in $k=0,\dots, n$.
Setting $w_{n,n+1}=0$, we have $\sum_{k=0}^{n+1} w_{n,k}=\sum_{k=0}^{n+1} w_{n+1,k}=1$. Then
the ratio $w_{n,k}/w_{n+1,k}$ is  decreasing in $k=0,\dots, n+1$,
which together with (\ref{eq4.69}) implies (\ref{eq4.6}) 
 (see \emph{e.g.} \cite[Chapter 1]{SS}). This completes the induction step and completes
the proof of (i).

(ii) By (\ref{eq3}),
\begin{align*}
\frac{\fh_{2n}}{\qh_{2n}}=2 \sum_{1\leq i <j\leq d} h_i h_j \frac{\MP(\Sh_{2n}=\bfe_d^{i}
+\bfe_d^j)}{\qh_{2n}}.
\end{align*}
Since by (i),
$\MP(\Sh_{2n}=\bfe_d^{i}+\bfe_d^j)/\qh_{2n} \leq \MP(\Sh_{2(n+1)}=\bfe_d^{i}+\bfe_d^j)/\qh_{2(n+1)}$,
(ii) follows. The proof is complete.
\end{proof}

\begin{lemma}\label{Lem9}
Let $\mathbf{h}$ be a $d$-dimensional probability vector.
\begin{align*} 
(i)&\;\; \text{For}\; n\geq 0 \;\text{and}\; 1\leq i<j\leq d, \qh_{2n}=
\MP(\Sh_{2n}=\zero_d)\geq
\MP(\Sh_{2n}=\bfe_d^{i}+\bfe_d^j).\\
(ii)&\;\;\text{For}\;n\geq 0,\\ 
&\qquad \qquad \qh_{2n}-\qh_{2n+2}\geq \frac{1}{2n+1}\; \qh_{2n+2}.\\
(iii)&\;\;\text{For}\;0\leq k \leq \frac{n}{2}-1,\\
&\qquad \qquad \frac{\qh_{2k}\; \qh_{2n-2k}}{\qh_{2k+2}\;\qh_{2n-2k-2}} \geq \frac{2k+2}{2k+1}\;\frac{2n-2k-1}{2n-2k}.
\\
(iv)& \;\; \text{For}\; n=2m\geq 2\;\text{and}\; 0\leq k<m, \\
&\qquad \qquad \frac{\qh_{2k}\;\qh_{4m-2k}}{(\qh_{2m})^2}\geq \frac{\binom{2k}{k}\binom{4m-2k}{2m-k}}{\binom{2m}{m}^2}.\\
(v)& \;\; \text{For}\; n=2m+1\geq 3\;
\text{and}\; 0\leq k<m,\\
&\qquad \qquad \frac{\qh_{2k}\;\qh_{4m-2k+2}}{\qh_{2m}\;\qh_{2m+2}}\geq 
\frac{\binom{2k}{k}\binom{4m-2k+1}{2m-k}}{\binom{2m}{m}\;\binom{2m+1}{m}}.
\end{align*}
\end{lemma}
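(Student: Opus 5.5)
We follow the proof of Lemma \ref{Lem5}, with Lemma \ref{Lem1} and Lemma \ref{Lem8} taking the roles that Lemma \ref{Lem1} and Lemma \ref{Lem4} played for $d=2$. Part (i) is the main new ingredient; parts (ii)--(v) then follow from it by the same algebra as in Section 3.

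\emph{Part (i).} There are two natural routes.

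The first is Fourier. Write
\[
\MP(\Sh_{2n}=x)=(2\pi)^{-d}\int_{[-\pi,\pi]^d}\phi(\theta)^{2n}e^{-i\langle x,\theta\rangle}\,d\theta,
\]
where $\phi(\theta)=\sum_k h_k\cos\theta_k$ is real. Then $\phi^{2n}\geq 0$ and $|e^{-i\langle x,\theta\rangle}|\le 1$, so $\MP(\Sh_{2n}=x)\le \MP(\Sh_{2n}=\zero_d)$ for every $x$, and in particular for $x=\bfe_d^i+\bfe_d^j$.

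The second route stays inside the paper's conditioning framework. Condition on the numbers of moves in each dimension, $N^k_{2n}$ for $k=1,\dots,d$. Given these counts, the coordinates are independent one-dimensional simple walks. The event $\{\Sh_{2n}=\zero_d\}$ requires every count to be even. The event $\{\Sh_{2n}=\bfe_d^i+\bfe_d^j\}$ requires the counts in dimensions $i$ and $j$ to be odd and all other counts even. These configurations differ, so a direct comparison fails. The fix is to group the dimensions $(i,j)$ together: given $N^i_{2n}+N^j_{2n}=2m$, the pair $(i,j)$ is a two-dimensional walk $\Sa$ with $\alpha=h_i/(h_i+h_j)$, and it is independent of the remaining dimensions, as in (\ref{eq4.3}). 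This gives
\begin{align*}
\qh_{2n}&=\sum_m c_{n,m}\,\qa_{2m}\,r_{2n-2m},\\
\MP(\Sh_{2n}=\bfe_d^i+\bfe_d^j)&=\sum_m c_{n,m}\,\MP(\Sa_{2m}=(1,1))\,r_{2n-2m},
\end{align*}
with the same nonnegative weights $c_{n,m}$ and the same factors $r_{2n-2m}$ for the other coordinates. Lemma \ref{Lem5}(i) gives $\qa_{2m}\ge \MP(\Sa_{2m}=(1,1))$ term by term, which proves (i). The case $h_i+h_j=0$ is trivial.

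\emph{Part (ii).} By Lemma \ref{Lem1},
\[
\frac{2n+2}{2n+1}\qh_{2n+2}=2\qh_2\qh_{2n}+\fh_{2n}.
\]
By (i), $\fh_{2n}\le 2\sum_{i<j}h_ih_j\,\qh_{2n}$. Since $2\qh_2=\sum_i h_i^2$, the right side is at most $\bigl(\sum_i h_i^2+2\sum_{i<j}h_ih_j\bigr)\qh_{2n}=\qh_{2n}$. Hence $\frac{2n+2}{2n+1}\qh_{2n+2}\le \qh_{2n}$, which is (ii).

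\emph{Part (iii).} Apply Lemma \ref{Lem1} at indices $k$ and $n-k-1$, multiply each identity by the complementary $\qh$, and compare the two right-hand sides. This is exactly (\ref{eq20})--(\ref{eq21}), with Lemma \ref{Lem8}(ii) replacing Lemma \ref{Lem4}(iii). Since $2k\le 2n-2k-2$, it gives $\fh_{2n-2k-2}\qh_{2k}\ge \fh_{2k}\qh_{2n-2k-2}$, and the inequality in (iii) follows.

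\emph{Parts (iv) and (v).} These are purely formal consequences of (iii). Telescope the ratios over $j=k,\dots,m-1$ and use the factorial product identities (\ref{eq20.3}) and the subsequent display, exactly as in (\ref{eq20.4}) and (\ref{eq20.5}). No new argument is needed.
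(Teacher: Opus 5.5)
Your proof is correct. Parts (ii)--(v) are exactly the paper's argument; the only real difference is part (i).

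The paper proves (i) by induction on $d$. It peels off the last coordinate and writes $\MP(\Sh_{2n}=\bfe_{d+1}^1+\bfe_{d+1}^2)$ via (\ref{eq4.71}) as a nonnegative combination of the ratios $\MP(\Sha_{2k}=\bfe_d^1+\bfe_d^2)/\qha_{2k}$. By (\ref{eq4.701}) the coefficients sum to $\qh_{2n}$, and each ratio is at most $1$ by the induction hypothesis, with Lemma \ref{Lem5}(i) as the base case.

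Your two routes differ from this as follows.
\begin{itemize}
\item The grouping route collapses coordinates $i,j$ into a single two-dimensional walk in one step, using the decomposition (\ref{eq4.3})--(\ref{eq4.4}) from the proof of Lemma \ref{Lem6}. You then apply Lemma \ref{Lem5}(i) termwise, so no induction is needed. It is equally elementary and somewhat more direct.
\item The Fourier route is different in kind. It does not use Lemma \ref{Lem5}(i) at all and handles all dimensions at once. It also gives the stronger bound $\MP(\Sh_{2n}=x)\le \qh_{2n}$ for every $x\in\mathbb{Z}^d$. The cost is importing Fourier inversion, which lies outside the paper's purely combinatorial toolkit. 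An elementary equivalent is $\MP(\Sh_{2n}=x)=\sum_y \MP(\Sh_n=y)\MP(\Sh_n=y-x)\le\sum_y\MP(\Sh_n=y)^2=\qh_{2n}$, using symmetry and Cauchy--Schwarz.
\end{itemize}

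The paper's route, by contrast, costs almost nothing once the weighted-average representation built for Lemma \ref{Lem8} is in place.
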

\begin{proof}[Proof of Lemma \ref{Lem9}]
(i) Without loss of generality, we assume $(i,j)=(1,2)$. We prove
$\qh_{2n}=\MP(\Sh_{2n}=\zero_d)\geq
\MP(\Sh_{2n}=\bfe_d^{1}+\bfe_d^2)$ by induction on $d$.
The case $d=2$ is done by Lemma \ref{Lem5}(i).
Suppose $\qh_{2n}=\MP(\Sh_{2n}=\zero_d)\geq
\MP(\Sh_{2n}=\bfe_d^{1}+\bfe_d^2)$ for all $d$-dimensional probability vectors $\mathbf{h}$.
By (\ref{eq4.701}) and (\ref{eq4.71}), for a $(d+1)$-dimensional probability vector $\mathbf{h}$,
\begin{align*}
\MP(\Sh_{2n}=\bfe_{d+1}^{1}+\bfe_{d+1}^2)
&=\sum_{k=0}^n b_{n,n-k} \;\qha_{2k} \;\frac{\MP(\Sha_{2k}=\bfe_d^{1}+\bfe_d^2)}{\qha_{2k}}\\
&\leq \sum_{k=0}^n b_{n,n-k} \;\qha_{2k}=\MP(\Sh_{2n}=\zero_{d+1})=\qh_{2n},
\end{align*}
where  $b_{n,k}$ is given in (\ref{eq4.70}) and the inequality follows from the induction hypothesis applied to
  $\mathbf{h}'=(h_1,\dots, h_d)/(1-h_{d+1})$. The induction proof is complete.

(ii) 
By Lemma \ref{Lem1}  and $\qh_2=\sum_{i=1}^d h_i^2/2$,
\begin{align*}
\frac{2n+2}{2n+1}\;\qh_{2n+2}&=2\qh_2\;\qh_{2n}+\fh_{2n}\\
&=(\sum_{i=1}^d h_i^2) \; \qh_{2n}+2(\sum_{1\leq i<j\leq d} h_i h_j)\; \qh_{2n}-
2\sum_{1\leq i<j\leq d}h_i h_j (\qh_{2n}-\MP(\Sh_{2n}=\bfe_d^{i}+\bfe_d^j))\\
&\leq (\sum_{i=1}^d h_i^2 + 2\sum_{1\leq i<j\leq d}h_i h_j)\; \qh_{2n}=\qh_{2n},
\end{align*}
where the inequality is by (i), proving (ii).

(iii) The proof of (iii) closely follows that of Lemma \ref{Lem5}(iii). The only difference is
that the latter and the former call for
Lemma \ref{Lem4}(iii) and  Lemma \ref{Lem8}(ii), respectively.

(iv)--(v) The proofs of (iv) and (v) closely follow those of Lemma \ref{Lem5}(iv) and \ref{Lem5}(v).
The only difference is that the latter and the former call for Lemma \ref{Lem5}(iii) and
Lemma \ref{Lem9}(iii), respectively.
\end{proof}

\begin{proof}[Proof of Lemma \ref{Lem7}]
The proof closely follows that of Lemma \ref{Lem3} where two combinatorial identities in
(\ref{eq20.10}) and (\ref{eq20.16}) are derived which are needed for the proof
of Lemma \ref{Lem7}. We sketch the proof of $\gh_{n-1}\geq \gh_n$. For $n=1$, we have
\begin{align*}
\gh_1=\sum_{k=0}^1 \qh_{2k}\;\qh_{2-2k}=2 \qh_0\;\qh_2=2\;\qh_2=\sum_{i=1}^d h_i^2\leq 1=\gh_0,
\end{align*}
proving the case $n=1$.
For the case $n=2m\geq 2$, the proof of $\gh_{2m-1}\geq \gh_{2m}$ is essentially the same as
that of $\ga_{2m-1}\geq \ga_{2m}$. The only difference is that the latter and the former
call for Lemma \ref{Lem5}(ii) and 5(iv) and Lemma \ref{Lem9}(ii) and 9(iv), respectively. (Note that
the identity in (\ref{eq20.10}) is needed for the case $n=2m\geq 2$.)
For the case $n=2m+1\geq 3$, the proof of $\gh_{2m}\geq \gh_{2m+1}$ is essentially the same as
that of $\ga_{2m}\geq \ga_{2m+1}$. The only difference is that the latter and the former
call for Lemma \ref{Lem5}(ii) and 5(v) and Lemma \ref{Lem9}(ii) and 9(v), respectively.
(Note that the identity in (\ref{eq20.16}) is needed for the case $n=2m+1\geq 3$.)
\end{proof}

\begin{remark}\label{section4}
By Lemma \ref{Lem6}, $q_{2n}^{\mathbf{h}'}>q_{2n}^{\mathbf{h}''}$ for $n\geq 1$
if $\mathbf{h}' \succ \mathbf{h}''$.
This is an extension of Remark \ref{section3} to $d\geq 3$.
\end{remark}

Shoou-Ren Hsiau, Department of Mathematics, National Changhua University of Education, Taiwan, ROC.
Email: srhsiau@cc.ncue.edu.tw

Ting-Yi Tsai, Department of Mathematics, National Changhua University of Education, Taiwan, ROC.
Email: ejiwum1019@gmail.com

Yi-Ching Yao, Institute of Statistical Science, Academia Sinica, Taiwan, ROC. Email: yao@stat.sinica.edu.tw

\end{document}